\newcommand{\mcal}{\mathcal}
\newcommand{\mscr}{\mathscr}
\newcommand*{\sHom}{\mscr{H}\kern -.5pt om}
\newcommand*{\sExt}{\mscr{E}\kern -.5pt xt}
\newcommand{\PP}{\mathbb{P}}
\newcommand{\Z}{\mathbb{Z}}
\renewcommand{\O}{\mcal{O}}
\DeclareMathOperator{\Pic}{Pic\,}
\DeclareMathOperator{\codim}{codim\,}
\DeclareMathOperator{\rk}{rk}
\DeclareMathOperator{\adj}{adj}
\DeclareMathOperator{\sing}{sing}
\DeclareMathOperator{\Bl}{Bl}
\newtheorem{theorem}{Theorem}[section]
\newtheorem{proposition}[theorem]{Proposition}
\newtheorem{corollary}[theorem]{Corollary}
\newtheorem{lemma}[theorem]{Lemma}
\theoremstyle{definition}
\newtheorem{definition}[theorem]{Definition}
\theoremstyle{remark}
\title{Tetragonal Canonical Curves}
\author{Henry Fontana}
\address{Department of Mathematics, Statistics and Computer Science, University of Illinois at Chicago, Chicago, IL, 60607}
\email{hfonta2@uic.edu}
\begin{document}

\begin{abstract}
In this note we compute the Harder-Narasimhan filtration of $N_{C/\PP^{g-1}}$ where $C$ is a general tetragonal canonical curve $C \subset \PP^{g-1}$ over an algebraically closed field $k$. Any tetragonal canonical curve $C$ lies on a $3$-dimensional rational normal scroll $Q \subset \PP^{g-1}$. Due to a corollary of a result of Schreyer we know that $C$ is a complete intersection in $Q$, and thus the normal bundle $N_{C/Q}$ splits as a direct sum. Using a result of Coşkun and Smith we show that the quotient $N_{Q/\PP^{g-1}} \rvert_C$ is stable for a general tetragonal canonical curve $C$, this gives us the HN-filtration of $N_{C/\PP^{g-1}}$.
\end{abstract}

\maketitle

\section{Introduction}

Let $C$ be a smooth non-hyperelliptic curve $C$ of genus $g$ over an algebraically closed field.  The normal bundle $N_{C/\PP^{g-1}}$ is an intrinsic invariant which controls the deformations of $C$ in its canonical embedding $C \hookrightarrow \PP^{g-1}$. In \cite{AFO16} Aprodu, Farkas, and Ortega conjectured that $N_{C/\PP^{g-1}}$ is semi-stable for a general canonical curve. This conjecture was confirmed by Coşkun, Larson, and Vogt in \cite{CLV23} where they proved that $N_{C/\PP^{g-1}}$ is semi-stable for the general curve when $g \geq 7$. However, the normal bundle of a trigonal canonical curve $C \subset \PP^{g-1}$ is unstable, due to the fact that such a curve lies on a surface scroll $S$. In \cite{FON25} the author shows that $N_{C/S} \subset N_{C/\PP^{g-1}}$ gives the Harder-Narasimhan filtration for a general trigonal canonical curve $C$. A tetragonal canonical curve $C$ lies on a threefold scroll $Q \subset \PP^{g-1}$ and $N_{C/Q} \subset N_{C/\PP^{g-1}}$ is a destabilizing subbundle. Using a result of Schreyer, section $6$ of \cite{Sch86}, we can compute the normal bundle $N_{C/Q}$ of the curve in the scroll.

\begin{lemma}
Let $C$ be a tetragonal canonical curve of genus $g$ and $Q$ the $3$-fold scroll on which it lies. Denote by $H$ the embedding class of $Q \subset \PP^{g-1}$ and $R$ the class of a fiber. Then $C$ is a complete intersection in $Q$ of surfaces $Y,Z$ such that
$$[Y]=2H-b_1R$$
$$[Z]=2H-b_2R$$
where $b_1+b_2=g-5$
\end{lemma}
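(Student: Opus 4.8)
The plan is to read off the classes $[Y],[Z]$ from the intersection theory of the scroll, taking the complete intersection structure from Schreyer as given. Recall that $Q=\PP(\mathcal{E})$ for a rank three bundle $\mathcal{E}$ on $\PP^1$ with $\deg Q = g-3$, so $\Pic(Q)=\Z H\oplus\Z R$ with $R^2=0$, $H^2R=1$, and $H^3=g-3$. By the corollary of Schreyer's theorem quoted above, $C=Y\cap Z$ is a complete intersection of two surfaces; writing $[Y]=a_1H+c_1R$ and $[Z]=a_2H+c_2R$, the task is to show $a_1=a_2=2$ and that $c_1+c_2=-(g-5)$.

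First I would fix the coefficients $a_i$ of $H$. The $g^1_4$ on $C$ is precisely the pencil cut out by the ruling $|R|$, so $C\cdot R=4$. Expanding $[C]=[Y]\cdot[Z]$ and using $R^2=0$ gives $C\cdot R=a_1a_2\,(H^2R)=a_1a_2$, whence $a_1a_2=4$. To exclude the unbalanced factorisation $\{a_1,a_2\}=\{1,4\}$, I would restrict to a general ruling plane $R\cong\PP^2$: since $H|_R=\mathcal{O}_{\PP^2}(1)$ and the fibre has trivial normal bundle, so $R|_R=0$, the surface $Y$ meets this plane in a curve of degree $a_1$, and the four points $C\cap R$ are cut out by the two plane curves $Y|_R$ and $Z|_R$. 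If some $a_i=1$ these four points would be collinear; but a general fibre of the $g^1_4$ spans its plane by geometric Riemann--Roch (this is exactly why the planes sweep out a threefold), so it is not collinear, and therefore $a_1=a_2=2$. Setting $c_i=-b_i$ puts the classes in the desired shape $2H-b_iR$. I expect this identification of the relative degrees to be the step that really uses the geometry of the scroll, and hence the main point to get right; everything after it is bookkeeping.

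Finally I would obtain $b_1+b_2=g-5$ from a degree count. Because $C$ is canonically embedded, $\deg C=2g-2$, while
$$\deg C=[Y]\cdot[Z]\cdot H=(2H-b_1R)(2H-b_2R)H=4H^3-2(b_1+b_2)H^2R=4(g-3)-2(b_1+b_2).$$
Equating the two values of $\deg C$ gives $b_1+b_2=g-5$. As a check I would redo this by adjunction: from $K_Q=-3H+(g-5)R$ and the complete intersection formula $K_C=(K_Q+Y+Z)|_C$ one gets $K_C=H|_C+(g-5-b_1-b_2)\,R|_C$, and comparing with $K_C=H|_C$ forces $(g-5-b_1-b_2)R|_C=0$; since $R|_C$ is the nonzero degree four class of the $g^1_4$, again $b_1+b_2=g-5$.
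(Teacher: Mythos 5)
Your argument is correct, but it reaches the conclusion by a genuinely different route from the paper's. The paper reads everything off from Schreyer's explicit resolution (Proposition \ref{P5}): the first graded piece of the resolution of $\mathscr{I}_C$ is $\bigoplus_{i=1}^{\beta}\O_Q(-2H+b_iR)$ with $\beta=d(d-3)/2=2$ for $d=4$, so the two generators already carry the stated classes, $C$ is a complete intersection because a codimension-two ideal with two generators is one, and the relation $b_1+b_2=g-5$ is simply part of Schreyer's data. You instead import from Schreyer only the qualitative fact that $C=Y\cap Z$ for two surfaces and then recover the classes by intersection theory: $a_1a_2=C\cdot R=4$ pins down the $H$-coefficients up to the factorization $\{1,4\}$ versus $\{2,2\}$; the former is excluded because a fibre of the $g^1_4$ spans a plane (geometric Riemann--Roch, a fact the paper itself records in the preliminaries --- and which holds for every fibre, since a $g^2_4$ would violate Clifford on a non-hyperelliptic curve); and the degree count $4(g-3)-2(b_1+b_2)=2g-2$ yields $b_1+b_2=g-5$. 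Your auxiliary facts ($H\rvert_R=\O_{\PP^2}(1)$ and $R\rvert_R=0$ on a general fibre, $H^3=\deg Q=g-3$, and the adjunction cross-check against $K_Q=-3H+(g-5)R$) are all correct. What your version buys is an explanation of \emph{why} the $H$-coefficient is $2$ and where $g-5$ comes from, neither of which the paper derives; what it still owes to Schreyer is the one genuinely hard input, namely that $\mathscr{I}_C$ needs only two generators.
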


The numbers $b_1$ and $b_2$ are invariants of $C$ determined by the graded-betti numbers, by convention we assume $b_1 \geq b_2$. Furthermore in \cite{BP21} Bujokas and Patel prove that $\lvert b_1 - b_2 \rvert \leq 1$ for the general tetragonal canonical curve. In the case when $g$ is even we must have $b_1 > b_2$ which implies $N_{C/Y}$ is a destabilizing line subbundle of $N_{C/Q}$. When $g$ is odd we have $b_1=b_2$ for the general curve so that $N_{C/Q}$ is semi-stable. The main result of this note is the computation of the HN-filtration of $N_{C/\PP^{g-1}}$ where $C$ is tetragonal of genus $g$.

\begin{theorem} \label{T1}
Let $C$ be a general tetragonal canonical curve of genus $g \geq 7$ with $g \equiv 1 \mod 2$, then
$$0 \subset N_{C/Q} \subset N_{C/\PP^{g-1}}$$
is the Harder-Narasimhan filtration of $N_{C/\PP^{g-1}}$.
\end{theorem}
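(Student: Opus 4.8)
The plan is to verify directly that the two-step filtration $0 \subset N_{C/Q} \subset N_{C/\PP^{g-1}}$ satisfies the defining properties of the Harder--Narasimhan filtration --- namely that both successive quotients are semistable and that their slopes are strictly decreasing --- so that by uniqueness of the HN-filtration it must be the one claimed. The two graded pieces are $N_{C/Q}$ and $N_{Q/\PP^{g-1}}\rvert_C = N_{C/\PP^{g-1}}/N_{C/Q}$, fitting into the normal bundle sequence of the flag $C \subset Q \subset \PP^{g-1}$,
$$0 \to N_{C/Q} \to N_{C/\PP^{g-1}} \to N_{Q/\PP^{g-1}}\rvert_C \to 0.$$
Thus I must establish three facts: (i) $N_{C/Q}$ is semistable, (ii) $N_{Q/\PP^{g-1}}\rvert_C$ is semistable, and (iii) $\mu(N_{C/Q}) > \mu(N_{Q/\PP^{g-1}}\rvert_C)$.

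For (i), since $g$ is odd the general tetragonal curve has $b_1 = b_2$, so by the Lemma $N_{C/Q} \cong \OO_C(2H - b_1 R) \oplus \OO_C(2H - b_2 R)$ is a direct sum of two line bundles of equal degree; such a bundle is polystable, hence semistable. For (ii), this is where the genuine work lies: I would invoke the result of Coşkun and Smith to conclude that the restriction $N_{Q/\PP^{g-1}}\rvert_C$ is in fact \emph{stable} for a general choice of $C$ (stability is more than needed, as semistability already suffices for the HN conclusion). This is the main obstacle of the argument, since one must control how (semi)stability of the ambient bundle $N_{Q/\PP^{g-1}}$ on the threefold scroll descends to its restriction to the curve $C$, and check that the general-position hypotheses of the Coşkun--Smith criterion are actually satisfied by the general tetragonal canonical curve.

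For (iii), the slope computation is routine. Using $\deg_C H = \deg K_C = 2g-2$ from the canonical embedding and $\deg_C R = 4$ (a fiber of the scroll meets $C$ in the degree-$4$ divisor of the $g^1_4$), the Lemma gives
$$\deg N_{C/Q} = \deg_C\bigl(4H - (g-5)R\bigr) = 4(2g-2) - 4(g-5) = 4g+12,$$
so that $\mu(N_{C/Q}) = 2g+6$. From the Euler sequence together with $0 \to T_C \to T_{\PP^{g-1}}\rvert_C \to N_{C/\PP^{g-1}} \to 0$ one computes $\deg N_{C/\PP^{g-1}} = g(2g-2) - (2-2g) = 2g^2-2$, with rank $g-2$. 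Hence $N_{Q/\PP^{g-1}}\rvert_C$ has rank $g-4$ and degree $2g^2 - 4g - 14$, giving $\mu(N_{Q/\PP^{g-1}}\rvert_C) = (2g^2 - 4g - 14)/(g-4)$. Since $g-4 > 0$, clearing denominators reduces the inequality $2g+6 > (2g^2-4g-14)/(g-4)$ to $2g - 10 > 0$, which holds precisely for $g > 5$ and in particular for every $g \geq 7$.

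Combining (i)--(iii), the filtration has semistable graded pieces of strictly decreasing slope, and is therefore the Harder--Narasimhan filtration of $N_{C/\PP^{g-1}}$. The decisive and most delicate step is (ii), the stability of the restricted quotient bundle deduced from Coşkun--Smith; steps (i) and (iii) are formal consequences of the Lemma and the standard Euler-sequence degree count.
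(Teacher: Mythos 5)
Your overall framework is correct and matches the paper's: exhibit the two graded pieces of the filtration $0 \subset N_{C/Q} \subset N_{C/\PP^{g-1}}$ as semistable bundles of strictly decreasing slope and invoke uniqueness of the HN-filtration. Steps (i) and (iii) are fine: for odd $g$ the balancedness result of Bujokas--Patel gives $b_1 = b_2$, so $N_{C/Q}$ is a sum of two line bundles of equal degree and hence semistable, and your slope comparison $2g+6 > 2g+4+\tfrac{2}{g-4}$ for $g > 5$ is the same computation the paper does.

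The genuine gap is step (ii), which you correctly identify as the heart of the matter but do not actually prove. The Co\c{s}kun--Smith result used in the paper is only the statement that the normal bundle of an elliptic normal curve in $\PP^d$ is semistable; it says nothing directly about $N_{Q/\PP^{g-1}}\rvert_C$ for a tetragonal canonical curve. Moreover, the route you sketch --- establish (semi)stability of the ambient bundle $N_{Q/\PP^{g-1}}$ on the threefold scroll and then ``descend'' to its restriction to $C$ --- goes in the wrong direction and would fail: a destabilizing subbundle of $N_{Q/\PP^{g-1}}\rvert_C$ need not extend to a subsheaf of $N_{Q/\PP^{g-1}}$, so stability of the bundle on $Q$ does not imply stability of its restriction to a curve (this is exactly why the paper proves stability of the restriction first, by a different method, and then deduces stability of $N_{Q/\PP^{g-1}}$ as a \emph{corollary}). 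The paper's actual argument for (ii) is a degeneration: specialize $C$ to a $g$-secant union $C_1 \cup C_2$ of a rational normal curve $C_1 \subset \PP^{g-2}$ of degree $g-2$ and an elliptic normal curve $C_2$ on $Q$; bound the slope of any subbundle restricted to $C_1$ by $g$ (using stability of $N_{C_1/\PP^{g-2}}$ and the surjection onto $\mathcal{E}\rvert_{C_1}$, which requires the geometric input of Lemma \ref{L2}), bound it on $C_2$ by $g+2+\tfrac{4}{g-2}$ (this is where Co\c{s}kun--Smith enters), add the two bounds, and transfer the resulting bound $2g+2+\tfrac{4}{g-2}$ to the general curve via the adjusted-slope specialization result of Co\c{s}kun--Larson--Vogt. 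Without some version of this degeneration (or another genuine proof of semistability of the quotient), your argument is incomplete at its decisive step.
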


\begin{theorem} \label{T2}
Let $C$ be a general tetragonal canonical curve of genus $g \geq 6$ with $g \equiv 0 \mod 2$, then
$$0 \subset N_{C/Y} \subset N_{C/\PP^{g-1}}$$
is the Harder-Narasimhan filtration of $N_{C/\PP^{g-1}}$.
\end{theorem}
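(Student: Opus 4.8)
The plan is to read the Harder--Narasimhan filtration of $N_{C/\PP^{g-1}}$ off the normal bundle sequence
$$0 \to N_{C/Q} \to N_{C/\PP^{g-1}} \to N_{Q/\PP^{g-1}}\rvert_C \to 0$$
together with the splitting $N_{C/Q} = N_{C/Y}\oplus N_{C/Z} = \OO_Q(Z)\rvert_C \oplus \OO_Q(Y)\rvert_C$ coming from the complete intersection $C = Y \cap Z$ of the previous lemma. First I would record the numerics. On the threefold scroll one has $H^3 = g-3$, $H^2 R = 1$, $R^2 = 0$, and $[C] = (2H - b_1 R)(2H - b_2 R)$; from this $N_{C/\PP^{g-1}}$ has rank $g-2$ and degree $2(g^2-1)$, the line bundles satisfy $\deg N_{C/Y} = 4(g-1-b_2)$ and $\deg N_{C/Z} = 4(g-1-b_1)$ so that $\mu(N_{C/Q}) = 2(g+3)$ always, and the quotient $N_{Q/\PP^{g-1}}\rvert_C$ has rank $g-4$ and degree $2g^2 - 4g - 14$. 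The slope of $N_{C/Q}$ exceeds that of the quotient by $\tfrac{2(g-5)}{g-4} > 0$, so $N_{C/Q}$ always raises the slope.

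The one external input is that $N_{Q/\PP^{g-1}}\rvert_C$ is stable for general $C$, which I would obtain from the result of Coşkun and Smith; this controls the bottom graded piece. For Theorem~\ref{T1} (odd $g$) we have $b_1 = b_2$, so $N_{C/Q}$ is a balanced sum of two line bundles of equal degree $2(g+3)$ and is therefore semistable. Both $N_{C/Q}$ and $N_{Q/\PP^{g-1}}\rvert_C$ are then semistable with $\mu(N_{C/Q}) > \mu(N_{Q/\PP^{g-1}}\rvert_C)$, and the standard splicing principle --- if $0 \to A \to E \to B \to 0$ with $\mu_{\min}(A) > \mu_{\max}(B)$ then the HN filtration of $E$ is the concatenation of those of $A$ and $B$ --- immediately gives that $0 \subset N_{C/Q} \subset N_{C/\PP^{g-1}}$ is the HN filtration. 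Thus Theorem~\ref{T1} is bookkeeping plus the Coşkun--Smith stability.

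The genuine difficulty is Theorem~\ref{T2} (even $g$), where $b_1 = b_2 + 1$ forces $\deg N_{C/Y} = 2g+8$ and $\deg N_{C/Z} = 2g+4$. Now $N_{C/Q}$ is \emph{not} semistable, and crucially $\mu_{\min}(N_{C/Q}) = 2g+4$ is strictly \emph{smaller} than $\mu(N_{Q/\PP^{g-1}}\rvert_C) = \tfrac{2g^2-4g-14}{g-4}$ (the gap is $\tfrac{2}{g-4}$), so the splicing principle fails and the HN filtration must jump over $N_{C/Q}$. To handle this I would instead use the flag $C \subset Y \subset \PP^{g-1}$, which gives
$$0 \to N_{C/Y} \to N_{C/\PP^{g-1}} \to N_{Y/\PP^{g-1}}\rvert_C \to 0,$$
so that the claimed quotient is exactly $N_{Y/\PP^{g-1}}\rvert_C$. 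Since $N_{C/Y}$ is a line bundle of slope $2g+8$, strictly above the slope $\tfrac{2g^2-2g-10}{g-3}$ of the quotient, the theorem reduces to the single assertion that $N_{Y/\PP^{g-1}}\rvert_C$ is semistable.

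This last step is where I expect the real work to lie. Restricting the normal bundle sequence of $Y \subset Q \subset \PP^{g-1}$ to $C$ exhibits $N_{Y/\PP^{g-1}}\rvert_C$ as an extension
$$0 \to N_{C/Z} \to N_{Y/\PP^{g-1}}\rvert_C \to N_{Q/\PP^{g-1}}\rvert_C \to 0$$
of the stable bundle $N_{Q/\PP^{g-1}}\rvert_C$ by the line bundle $N_{C/Z}$ of strictly \emph{smaller} slope $2g+4$. Because the sub has smaller slope than the quotient, stability of $N_{Q/\PP^{g-1}}\rvert_C$ alone does not force semistability: a destabilizing $F$ would, after intersecting with $N_{C/Z}$, be a subsheaf of $N_{Q/\PP^{g-1}}\rvert_C$ that lifts to the extension, i.e.\ one over which the extension class in $\Ext^1(N_{Q/\PP^{g-1}}\rvert_C, N_{C/Z}) = H^1(C, N_{C/Z} \otimes (N_{Q/\PP^{g-1}}\rvert_C)^\vee)$ becomes split. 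The plan is therefore to show that for the general tetragonal $C$ this class is nondegenerate enough that no subsheaf of $N_{Q/\PP^{g-1}}\rvert_C$ of slope $> \tfrac{2g^2-2g-10}{g-3}$ lifts; the natural route is to prove $N_{Y/\PP^{g-1}}$ semistable on the surface $Y$ for a suitable polarization and then apply a restriction theorem, using that $C$ moves in the positive class $\OO_Y(C) = \OO_Q(Z)\rvert_Y$ and is general there. Controlling this extension --- equivalently, the restriction of the surface normal bundle to the general curve in its class --- is the main obstacle, and the point at which the genericity of $C$ is indispensable.
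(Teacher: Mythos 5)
Your reduction of the theorem is the same as the paper's: both pass to the sequence $0 \to N_{C/Y} \to N_{C/\PP^{g-1}} \to N_{Y/\PP^{g-1}}\rvert_C \to 0$, observe that $N_{C/Y}$ is a line bundle of slope $2g+8$ exceeding $\mu(N_{Y/\PP^{g-1}}\rvert_C)=2g+4+\tfrac{2}{g-3}$, and thereby reduce Theorem \ref{T2} to the semistability of $N_{Y/\PP^{g-1}}\rvert_C$. Your numerics are all correct, and your observation that bare stability of $N_{Q/\PP^{g-1}}\rvert_C$ cannot by itself force this semistability is sharp: since $\mu(N_{Q/\PP^{g-1}}\rvert_C)=2g+4+\tfrac{2}{g-4}$ exceeds $\mu(N_{Y/\PP^{g-1}}\rvert_C)$, a splitting of the extension by $N_{C/Z}$ would itself be destabilizing. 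But at exactly this point the proposal stops. The semistability of $N_{Y/\PP^{g-1}}\rvert_C$ is deferred to a ``plan'' involving the extension class in $\Ext^1(N_{Q/\PP^{g-1}}\rvert_C,\, N_{C/Z})$, semistability of $N_{Y/\PP^{g-1}}$ on the surface $Y$, and a restriction theorem, none of which is carried out. That step is the entire content of the theorem beyond bookkeeping, so as written there is a genuine gap.

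The missing idea is that the available input is strictly stronger than stability of the quotient. The degeneration of Section 3 (to a $g$-secant union of a rational normal curve in a hyperplane and an elliptic normal curve on $Q$) yields the quantitative bound of Lemma \ref{L1}: every proper subbundle $\mathcal{G} \subset N_{Q/\PP^{g-1}}\rvert_C$ satisfies $\mu(\mathcal{G}) \leq 2g+2+\tfrac{4}{g-2}$, roughly $2$ below the slope of the bundle itself. Feeding this through $0 \to N_{C/Z} \to N_{Y/\PP^{g-1}}\rvert_C \to N_{Q/\PP^{g-1}}\rvert_C \to 0$ via the elementary estimate that $\deg\mathcal{F}$ is at most the degree of the saturation of $\mathcal{F}\cap N_{C/Z}$ plus the degree of the saturation of the image of $\mathcal{F}$, and using $\deg N_{C/Z}=2g+4$, gives $\deg \mathcal{F} \leq (2g+4) + (r-1)\bigl(2g+2+\tfrac{4}{g-2}\bigr) \leq r\bigl(2g+4+\tfrac{2}{g-3}\bigr)$ for $r=\rk\mathcal{F}$, which is the desired semistability; no analysis of the extension class and no restriction theorem on $Y$ is needed. (Subbundles meeting $N_{C/Z}$ trivially must be treated separately, since their saturated image can be all of $N_{Q/\PP^{g-1}}\rvert_C$; this is precisely the non-splitness issue you flagged, and it is fair to say the paper's own closing sentence is terser than it should be on this point.) To complete your argument, replace the black box ``$N_{Q/\PP^{g-1}}\rvert_C$ is stable'' by the actual degeneration bound of Lemma \ref{L1}; the step you postponed then becomes a two-line slope computation rather than the main obstacle.
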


In the next section we introduce preliminary results and conventions. In section $3$ we will use degeneration to prove that $N_{Q/\PP^{g-1}} \rvert_C$ is stable for the general curve, this proves Theorem \ref{T1}. We also obtain the stability of the bundle $N_{Q/\PP^{g-1}}$ as a corollary.

\begin{corollary}
Suppose that
$$Q = \PP(\O_{\PP^1} \oplus \O_{\PP^1}(a) \oplus \O_{\PP^1}(b))$$
where $0 \leq a,b \leq 1$. Let $H$ denote the class of $\O_{\PP(\mathcal{E})}(1)$ while $R$ is the class of a fiber. Then for any $c > 0$ if $r=3c+a+b+2$ the linear series $\lvert H + cR \rvert$ embeds $Q$ as a smooth rational normal scroll in $\PP^r$ such that $N_{Q/\PP^r}$ is stable.
\end{corollary}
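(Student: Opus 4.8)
The plan is to deduce the corollary from the stability of the restricted bundle established in Section 3 by reinterpreting the embedding. Twisting the defining bundle, the linear series $\lvert H+cR\rvert$ realizes $Q$ as $\PP(\O_{\PP^1}(c)\oplus\O_{\PP^1}(a+c)\oplus\O_{\PP^1}(b+c))$ embedded by its own $\O(1)$; since $0\le a,b\le 1$ the three invariants $c,\,a+c,\,b+c$ differ by at most one, so this is a balanced $3$-fold rational normal scroll of degree $3c+a+b$ in $\PP^r$ with $r=3c+a+b+2$. Setting $g:=r+1=3c+a+b+3$, this is exactly the scroll carrying a general tetragonal canonical curve of genus $g$, so that $\PP^r=\PP^{g-1}$ and $N_{Q/\PP^r}=N_{Q/\PP^{g-1}}$. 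It therefore suffices to exhibit one smooth curve $C\subset Q$ on which $N_{Q/\PP^{g-1}}\rvert_C$ is stable and whose class controls the relevant slopes. Section 3 supplies such a curve, the general tetragonal canonical curve $C=Y\cap Z$ with $[Y]=2L-b_1R$, $[Z]=2L-b_2R$ (writing $L:=H+cR$, $b_1+b_2=g-5$). A quick adjunction check, using $K_Q=-3H+(a+b-2)R$, gives $K_Q+Y+Z=L$ and hence $K_C=L\rvert_C$, confirming that these complete intersections are precisely the canonically embedded tetragonal curves and that the general one is general in $\lvert Y\rvert\cap\lvert Z\rvert$.

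The numerical heart of the argument is that, because $R^2=0$ on $Q$, the curve class is a perfect square:
\[
[C]=(2L-b_1R)(2L-b_2R)=4L^2-2(b_1+b_2)LR=M^2,\qquad M:=\tfrac12(Y+Z)=2L-\tfrac{g-5}{2}R.
\]
The $\Q$-divisor $M=2H+(2c-\tfrac{g-5}{2})R$ is ample, since $2c-\tfrac{g-5}{2}>0$ is equivalent to $c>a+b-2$, which holds for every $c>0$. I would then run the elementary direction of the Mehta--Ramanathan restriction principle relative to the polarization $M$: if $E:=N_{Q/\PP^{g-1}}$ were not $\mu_M$-stable, a saturated destabilizing subsheaf $F\subsetneq E$ with $\mu_M(F)\ge\mu_M(E)$ would, for the general member of $\lvert Y\rvert\cap\lvert Z\rvert$, restrict to a rank-preserving subsheaf $F\rvert_C\subset E\rvert_C$ with $\mu(F\rvert_C)=c_1(F)\cdot[C]/\rk F=\mu_M(F)\ge\mu_M(E)=\mu(E\rvert_C)$, contradicting the stability of $E\rvert_C$. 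Since the general tetragonal canonical curve is general in this complete-intersection system, this forces $N_{Q/\PP^{g-1}}$ to be $\mu_M$-stable.

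The step I expect to be the main obstacle is relating $\mu_M$-stability to stability with respect to the embedding polarization $L=H+cR$, since $M$ and $L$ are not proportional and $[C]$ is not a multiple of $L^2$. Here I would exploit $\rho(Q)=2$: as the polarization moves along the segment from $M$ to $L$ in the two-dimensional ample cone it crosses only finitely many walls, and at each wall a destabilizing subsheaf has first Chern class $c_1(F)=pH+qR$ constrained by the intersection numbers $H^3=a+b$, $H^2R=1$, $HR^2=R^2=0$ together with the computed value of $c_1(N_{Q/\PP^{g-1}})$. The task is to verify that no such $F$ changes the sign of $(\tfrac{c_1(F)}{\rk F}-\tfrac{c_1(E)}{\rk E})\cdot(M')^2$ between $M'=M$ and $M'=L$; equivalently, that the stability established along $[C]=M^2$ persists across the intervening walls. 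Carrying out this wall analysis --- or, alternatively, re-running the Section 3 degeneration directly for a complete intersection in $\lvert mL\rvert$ so as to test $L$ itself --- is where the real work lies, the remaining genericity and restriction statements being standard.
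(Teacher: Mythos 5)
The portion of your argument that you actually carry out coincides with the paper's proof: twist to realize $Q$ as the balanced scroll $\PP(\O_{\PP^1}(c)\oplus\O_{\PP^1}(a+c)\oplus\O_{\PP^1}(b+c))$ embedded by its own $\O(1)$, set $g=r+1$, observe that $Q$ carries general tetragonal canonical curves of genus $g$, and conclude via the elementary direction of restriction that a destabilizing subsheaf of $N_{Q/\PP^r}$ would destabilize $N_{Q/\PP^r}\rvert_C$. The paper's proof is exactly this and nothing more. Your further observation --- that since $R^2=0$ one has $[C]=M^2$ for the ample $\Q$-class $M=2L-\tfrac{g-5}{2}R$ (with $L=H+cR$), so the restriction argument only yields $\mu_M$-stability and not $\mu_L$-stability with respect to the embedding polarization --- is correct, and it identifies a real issue that the paper's one-sentence conclusion silently elides: $[C]$ is not proportional to $L^2$, so ``destabilizes $N_{Q/\PP^r}$'' and ``destabilizes $N_{Q/\PP^r}\rvert_C$'' are conditions on different linear functionals of $c_1(\mathcal{F})$.

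That said, your proposal does not prove the corollary as stated: the wall-crossing plan from $M$ to $L$ is only a plan (even the finiteness of walls for slope stability between two polarizations would need an argument via boundedness of $c_1$ of saturated destabilizers), so as written there is a gap. The gap can be closed without any wall analysis by using the quantitative output of Section 3 rather than bare stability of the restriction. Write $c_1(\mathcal{F})=pL+q'R$ for a rank-$s$ subbundle $\mathcal{F}\subset N_{Q/\PP^r}$. Lemma \ref{L1} gives $c_1(\mathcal{F})\cdot[C]/s=(p(2g-2)+4q')/s\le 2g+2+4/(g-2)$, which is strictly below $\mu(N\rvert_C)$ by roughly $2$; separately, restricting to a general fiber $\PP^2$ of the scroll, where $N_{Q/\PP^r}\rvert_{\PP^2}$ is a quotient of $N_{\PP^2/\PP^r}\cong\O_{\PP^2}(1)^{\oplus(r-2)}$ by $\O_{\PP^2}$, the argument of Lemma \ref{L3} yields $p\le s$. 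Combining the two bounds gives $\mu_L(\mathcal{F})=(p(g-3)+q')/s\le \tfrac{p}{s}\cdot\tfrac{g-5}{2}+\tfrac{g(g-1)}{2(g-2)}\le\tfrac{g^2-4g+5}{g-2}=g-1-\tfrac{g-3}{g-2}<g-1=\mu_L(N_{Q/\PP^r})$. In other words, the slack in Lemma \ref{L1} is exactly what absorbs the discrepancy between the polarizations $[C]$ and $L^2$; some such supplement (here, the fiber bound plus the quantitative slope estimate) is needed to make either your argument or the paper's deliver $\mu_L$-stability.
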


\begin{proof}
Set $g:=3c+a+b+3=r+1$ and observe that $\lvert E+cR \rvert$ defines an isomorphism of $Q$ onto the projective bundle
$$\PP(\O_{\PP^1}(c) \oplus \O_{\PP^1}(a+c) \oplus \O_{\PP^1}(b+c)) \subset \PP^r$$
Therefore the class $4H^2+(8c-2g+10)HR$ contains smooth tetragonal canonical curves of genus $g$ whose canonical class is the restriction to $C$ of $E+cR$. It follows that $N_{Q/\PP^r} \rvert_C$ is stable so we conclude that $N_{Q/\PP^r}$ is also stable, since the restriction of a destabilizing subbundle $\mathcal{F} \subset N_{Q/\PP^r}$ would destabilize $N_{Q/\PP^r} \rvert_C$.
\end{proof}

Furthermore since $N_{Q/\PP^{g-1}} \rvert_C$ is a quotient of $N_{Y/\PP^{g-1}} \rvert_C$ the proof of Theorem \ref{T1} gives a bound on the slope of subbundles $\mathcal{F} \subset N_{Y/\PP^{g-1}} \rvert_C$. It turns out that this bound is already good enough to give the semi-stability of $N_{Y/\PP^{g-1}} \rvert_C$, so the results of section $3$ will also prove Theorem \ref{T2}.

\section{Preliminaries}

\subsection{Canonical Curves}

If $C$ is a non-hyperelliptic algebraic curve of genus $g$ the canonical linear series defines an embedding $C \hookrightarrow \PP^{g-1}$. Conversely if $C \subset \PP^{g-1}$ is smooth of genus $g$ and degree $2g-2$ then $O_C(1) \cong \omega_C$. We therefore call a smooth genus $g$ and degree $2g-2$ curve in $\PP^{g-1}$ a \textbf{canonical curve}. Canonical curves lie in an irreducible component of the Hilbert scheme the general canonical curves are those lying in some open subset of this component.

\subsection{Vector Bundles}
Given a  vector bundle $\mathcal{F}$ on an algebraic curve $C$ we define the slope to be
$$\mu(\mathcal{F})=\frac{\deg(\mathcal{F})}{\rk(\mathcal{F})}$$
A vector bundle $\mathcal{E}$ is \textbf{semi-stable} if $\mu(\mathcal{F}) \leq \mu(\mathcal{E})$ for all subbundles $\mathcal{F} \subset \mathcal{E}$, and is stable if each inequality is strict. Every vector bundle admits a \textbf{Harder-Narasimhan} filtration in which successive quotients are semi-stable.
\begin{proposition}
If $\mathcal{F}$ is a vector bundle on an algebraic curve $C$ then there is a unique filtration
$$0=\mathcal{E}_0 \subset \mathcal{E}_1 \subset \dots \subset \mathcal{E}_k=\mathcal{F}$$
where the $\mathcal{E}_i$ are subbundles such that
\begin{itemize}
\item The quotient $\mathcal{G}_i=\mathcal{E}_{i}/\mathcal{E}_{i-1}$ is semi-stable for $i=1,\dots,k$
\item The slopes of the $\mathcal{G}_i$ are decreasing
$$\mu(\mathcal{G}_1) > \mu(\mathcal{G}_2) > \dots > \mu(\mathcal{G}_k)$$
\end{itemize}
\end{proposition}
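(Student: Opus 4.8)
The plan is to prove existence and uniqueness by the standard slope-induction argument, isolating the one genuinely geometric input (boundedness) from the purely formal manipulations of the slope. First I would record the \emph{seesaw} property: for any short exact sequence $0\to\mathcal{A}\to\mathcal{B}\to\mathcal{C}\to0$ of bundles on $C$, additivity of $\deg$ and $\rk$ makes $\mu(\mathcal{B})$ a weighted average of $\mu(\mathcal{A})$ and $\mu(\mathcal{C})$, so the three comparisons $\mu(\mathcal{A})\le\mu(\mathcal{B})$, $\mu(\mathcal{B})\le\mu(\mathcal{C})$, $\mu(\mathcal{A})\le\mu(\mathcal{C})$ are equivalent, as are the corresponding strict inequalities and equalities. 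In particular semi-stability can be tested on quotients ($\mu(\mathcal{Q})\ge\mu(\mathcal{E})$ for every quotient bundle $\mathcal{Q}$) as well as on subbundles, and one deduces the vanishing $\Hom(\mathcal{A},\mathcal{B})=0$ whenever $\mathcal{A},\mathcal{B}$ are semi-stable with $\mu(\mathcal{A})>\mu(\mathcal{B})$, since the image of a nonzero map is at once a quotient of $\mathcal{A}$ and a subbundle of $\mathcal{B}$.

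The technical heart is to bound the slopes of subbundles from above. Fixing a sufficiently positive line bundle $\mathcal{A}$, Serre's theorem gives an embedding $\mathcal{F}\hookrightarrow\mathcal{A}^{\oplus N}$; a line subbundle $\mathcal{L}\subseteq\mathcal{F}$ then maps nontrivially to some factor, producing a nonzero section of $\mathcal{A}\otimes\mathcal{L}^{-1}$ and hence $\deg\mathcal{L}\le\deg\mathcal{A}$. For a rank-$r$ subbundle $\mathcal{F}'$ one applies this to the line subbundle $\det\mathcal{F}'\hookrightarrow\wedge^{r}\mathcal{F}$, since $\deg\mathcal{F}'=\deg\det\mathcal{F}'$; as there are finitely many ranks, the quantity
$$\mu_{\max}(\mathcal{F}):=\sup\{\mu(\mathcal{F}')\mid 0\neq\mathcal{F}'\subseteq\mathcal{F}\}$$
is finite and, degrees being integers bounded above in each rank, it is attained. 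I would then produce the \textbf{maximal destabilizing subbundle} $\mathcal{E}_1$: if $\mathcal{F}_1,\mathcal{F}_2$ both attain $\mu_{\max}$, the sheaf sequence $0\to\mathcal{F}_1\cap\mathcal{F}_2\to\mathcal{F}_1\oplus\mathcal{F}_2\to\mathcal{F}_1+\mathcal{F}_2\to0$ together with the seesaw property forces the saturation of $\mathcal{F}_1+\mathcal{F}_2$ again to have slope $\mu_{\max}$, so there is a unique largest subbundle $\mathcal{E}_1$ of slope $\mu_{\max}$. By construction $\mathcal{E}_1$ is semi-stable, and the same maximality shows that any nonzero subbundle of $\mathcal{F}/\mathcal{E}_1$ of slope $\ge\mu(\mathcal{E}_1)$ would lift to a subbundle of $\mathcal{F}$ of slope $\mu_{\max}$ and larger rank, a contradiction; hence $\mu_{\max}(\mathcal{F}/\mathcal{E}_1)<\mu(\mathcal{E}_1)$.

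Existence now follows by induction on the rank: the Harder--Narasimhan filtration of $\mathcal{F}/\mathcal{E}_1$ pulls back to a filtration of $\mathcal{F}$ with semi-stable quotients, and the strict drop $\mu(\mathcal{E}_1)>\mu_{\max}(\mathcal{F}/\mathcal{E}_1)=\mu(\mathcal{G}_2)$ supplies the one new strict inequality of slopes. For uniqueness, let $0\subset\mathcal{E}_1'\subset\cdots\subset\mathcal{F}$ be any filtration with the stated properties; its first step $\mathcal{E}_1'$ is semi-stable of the top slope $\mu(\mathcal{G}_1')$, and the composite $\mathcal{E}_1\hookrightarrow\mathcal{F}\twoheadrightarrow\mathcal{F}/\mathcal{E}_1'$ vanishes because its image is a quotient of $\mathcal{E}_1$ (slope $\ge\mu_{\max}$) yet lands in pieces of slope $\le\mu(\mathcal{G}_2')<\mu(\mathcal{E}_1')\le\mu_{\max}$, giving $\mathcal{E}_1\subseteq\mathcal{E}_1'$; semi-stability of $\mathcal{E}_1'$ then forbids its subbundle $\mathcal{E}_1$ from having strictly larger slope, so $\mathcal{E}_1'=\mathcal{E}_1$, and induction on $\mathcal{F}/\mathcal{E}_1$ finishes. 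The step I expect to be the main obstacle is the boundedness of $\mu_{\max}$ in the second paragraph; everything else is a formal consequence of the seesaw property, whereas boundedness is where the geometry of $C$ (via Serre's theorem and Riemann--Roch) actually enters.
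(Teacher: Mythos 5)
Your proof is correct: it is the standard Harder--Narasimhan argument (seesaw property, boundedness of $\mu_{\max}$ via an embedding into $\mathcal{A}^{\oplus N}$ and determinants of subbundles, existence and uniqueness of the maximal destabilizing subbundle, then induction on rank). The paper states this proposition as background and gives no proof of its own, so there is nothing to compare against; the only point of care worth flagging in your write-up is that images of bundle maps and sums/intersections of subbundles are a priori only subsheaves, so one should pass to saturations (which on a curve can only increase slope) in the Hom-vanishing and in the $\mathcal{F}_1+\mathcal{F}_2$ step --- a routine fix that does not affect the argument.
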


If $C \subset \PP^r$ is a subscheme with ideal sheaf $\mathcal{I}_C$ then the conormal sheaf is
$$N_C^{\vee} \cong \frac{\mathcal{I}_C}{\mathcal{I}_C^2}$$
When $C$ has at worst nodal singularities then $N_C^{\vee}$ is locally free. If $C$ is smooth there is a short exact sequence of vector bundles

\[\begin{tikzcd}
	0 & {T_C} & {T_{\PP^r} \rvert_C} & {N_C} & 0
	\arrow[from=1-1, to=1-2]
	\arrow[from=1-2, to=1-3]
	\arrow[from=1-3, to=1-4]
	\arrow[from=1-4, to=1-5]
\end{tikzcd}\]

In particular when $r=g-1$ and $C$ is a canonical curve the above exact sequence implies
$$\deg(N_C)=\deg(T_{\PP^r} \rvert_C)-\deg(T_C)=2g^2-2$$
Since $\rk(N_C)=g-2$ we obtain the following Proposition. 
\begin{proposition} \label{P6}
If $N_C$ is the normal bundle of a smooth genus $g$ canonical curve $C \subset \PP^{g-1}$ then
$$\mu(N_C)=2g+4+\frac{6}{g-2}$$
\end{proposition}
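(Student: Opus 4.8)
The plan is to obtain $\mu(N_C)$ directly from its definition $\mu(N_C) = \deg(N_C)/\rk(N_C)$, so the entire argument reduces to recording the degree and rank of $N_C$ and performing a single polynomial division.

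First I would justify the two numerical inputs. The rank is immediate from the short exact sequence $0 \to T_C \to T_{\PP^{g-1}}\rvert_C \to N_C \to 0$: taking ranks gives $\rk(N_C) = (g-1) - 1 = g-2$. For the degree I would restrict the Euler sequence on $\PP^{g-1}$ to $C$ to get $\deg(T_{\PP^{g-1}}\rvert_C) = g \cdot \deg(\O_C(1))$; since $C$ is canonical we have $\O_C(1) \cong \omega_C$ and hence $\deg(\O_C(1)) = 2g-2$, giving $\deg(T_{\PP^{g-1}}\rvert_C) = g(2g-2)$. Using $\deg(T_C) = -\deg(\omega_C) = 2-2g$ and additivity of degree in the sequence, I recover $\deg(N_C) = g(2g-2) - (2-2g) = 2g^2 - 2$, exactly as stated in the discussion preceding the Proposition.

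With these in hand, the only remaining step is algebra. I would carry out the division $2g^2 - 2 = (2g+4)(g-2) + 6$, which yields
$$\mu(N_C) = \frac{2g^2-2}{g-2} = 2g+4 + \frac{6}{g-2}$$
as claimed. There is essentially no obstacle here: the content lies entirely in the two invariants $\deg(N_C) = 2g^2-2$ and $\rk(N_C) = g-2$, both already established in the text, and the identity $2g^2-2 = (2g+4)(g-2)+6$ is verified by expanding the right-hand side. The only point requiring any care is that the degree computation genuinely uses the hypothesis that the embedding is canonical, so that $\O_C(1) \cong \omega_C$; for a non-canonical embedding of the same abstract curve the slope would differ.
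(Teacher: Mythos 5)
Your proposal is correct and follows the same route as the paper: the paper likewise derives $\deg(N_C)=2g^2-2$ from the normal bundle exact sequence (with the Euler sequence and $\O_C(1)\cong\omega_C$ supplying $\deg(T_{\PP^{g-1}}\rvert_C)=g(2g-2)$) and then divides by $\rk(N_C)=g-2$. Your write-up just makes the intermediate degree computations slightly more explicit than the text does.
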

The normal bundle is easy to understand for complete intersections
$$X=D_1 \cap \dots \cap D_k$$
In this case given any point $p \in C$ we have
$$T_p X=T_p D_1 \cap T_p D_2 \cap \dots \cap T_p D_k$$
so there is an isomorphism
$$N_{D_1} \rvert_C \oplus \dots \oplus N_{D_k} \rvert_C \to N_C$$

\begin{proposition} \label{P1}
If $C \subset X$ is a complete intersection of divisors $D_1,\dots,D_k$ then
$$N_{C/X} \cong \O_C(D_1) \oplus \dots \oplus \O_C(D_k)$$
\end{proposition}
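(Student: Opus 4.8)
The plan is to work with the conormal sheaf and exploit the regular-sequence structure of a complete intersection. Write $\II_C \subset \O_X$ for the ideal sheaf of $C$; since the embedding is a local complete intersection, the conormal sheaf $\II_C/\II_C^2$ is locally free and $N_{C/X}^\vee \cong \II_C/\II_C^2$. It therefore suffices to exhibit an isomorphism $\II_C/\II_C^2 \cong \bigoplus_{i=1}^k \O_C(-D_i)$ and then dualize, using $\O_C(-D_i)^\vee \cong \O_C(D_i)$.

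First I would fix the generators. Each $D_i$ is the zero scheme of a section $s_i \in H^0(X,\O_X(D_i))$, and multiplication by $s_i$ identifies the ideal sheaf $\II_{D_i}$ with $\O_X(-D_i)$. Because $C = D_1 \cap \cdots \cap D_k$ scheme-theoretically, these sections generate $\II_C$, so the maps $\,\cdot s_i \colon \O_X(-D_i) \to \II_C\,$ assemble into a surjection $\bigoplus_{i=1}^k \O_X(-D_i) \twoheadrightarrow \II_C$. Reducing modulo $\II_C$ yields a surjection of $\O_C$-modules
$$\phi \colon \bigoplus_{i=1}^k \O_C(-D_i) \longrightarrow \II_C/\II_C^2.$$

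The crux is that $\phi$ is an isomorphism, and this is exactly where the complete-intersection hypothesis enters. The source is locally free of rank $k$ by construction, and the target is locally free of rank $k$ because $C$ is a local complete intersection of codimension $k$. A surjection of locally free sheaves of equal rank is an isomorphism, since its kernel is locally free of rank $k-k=0$ and hence vanishes. Equivalently, one can argue injectivity from local algebra: for a regular sequence $f_1,\dots,f_k$ generating an ideal $I$ in a Noetherian local ring, $I/I^2$ is free on the classes of the $f_i$, and the complete-intersection condition says the local equations of the $D_i$ form exactly such a regular sequence. Dualizing $\phi$ then gives $N_{C/X} \cong \bigoplus_{i=1}^k \O_C(D_i)$.

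The only real obstacle is the injectivity of $\phi$. The surjection $\phi$ exists for any subscheme cut out by the $s_i$, but it can fail to be injective when $C$ is not a complete intersection; it is precisely the transversality encoded by the regular-sequence condition — the same transversality of tangent spaces $T_pC = \bigcap_i T_pD_i$ observed above — that forces $\phi$ to be an isomorphism. Once local freeness of the conormal sheaf of a local complete intersection is granted, the rank count disposes of this step with no further computation.
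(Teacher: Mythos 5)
Your proof is correct, but it runs in the opposite direction from the paper's. The paper argues on the tangent side: at each point $p$ the complete--intersection condition gives $T_pC = T_pD_1 \cap \dots \cap T_pD_k$, so the normal spaces decompose and one gets an isomorphism $N_{D_1}\rvert_C \oplus \dots \oplus N_{D_k}\rvert_C \to N_{C/X}$ directly, with each summand $N_{D_i}\rvert_C \cong \O_C(D_i)$. You instead work dually with the conormal sheaf, building the surjection $\bigoplus_i \O_C(-D_i) \to \II_C/\II_C^2$ from the defining sections and killing its kernel by a rank count (or by the local fact that $I/I^2$ is free on a regular sequence). The trade-off: the paper's pointwise tangent-space argument is shorter and more geometric, but it is really only a sketch and implicitly needs $C$ and the $D_i$ to be smooth so that tangent spaces behave; your conormal argument is fully rigorous as stated and works for any local complete intersection, smooth or not, which is the standard level of generality for this fact. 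Both are fine for the application in the paper, where $C$ is a smooth curve cut out by two divisors on a smooth threefold scroll.
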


The following result from section $3$ of \cite{CS25} is useful in degeneration arguments. 

\begin{proposition}[\cite{CS25}] \label{P3}
If $X \subset \PP^d$ is an elliptic normal curve with $d \geq 3$ then $N_{X/\PP^d}$ is semi-stable.
\end{proposition}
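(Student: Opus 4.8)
The plan is to combine the translation symmetry of an elliptic normal curve with a sharp cohomological bound on the slopes of its subbundles. Write $n = d+1 = \deg X$ and $L = \O_X(1)$, so that the embedding is by the complete linear system $|L|$ and $X$ is projectively normal. From the Euler sequence restricted to $X$ together with the normal bundle sequence, using $T_X \cong \O_X$, one computes $\deg N_{X/\PP^d} = (d+1)^2$ and hence $\mu(N_{X/\PP^d}) = (d+1)^2/(d-1)$. The case $d=3$ is a reassuring sanity check: there $X$ is the complete intersection of two quadrics, so Proposition \ref{P1} gives $N_{X/\PP^3} \cong \O_X(2)^{\oplus 2}$, which is semistable; I would treat small $d$ as base cases.

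First I would exploit the fact that translation by any $n$-torsion point $a \in X$ extends, via the theta (Heisenberg) group acting on $H^0(L)$, to a projective automorphism of $\PP^d$ carrying $X$ to itself. Such an automorphism induces an isomorphism of $N_{X/\PP^d}$ covering the translation by $a$, so the pullback of $N_{X/\PP^d}$ by translation is again $N_{X/\PP^d}$ for every $a \in X[n]$. Because the Harder--Narasimhan filtration $0 = \mathcal{E}_0 \subset \cdots \subset \mathcal{E}_k = N_{X/\PP^d}$ is canonical, each $\mathcal{E}_i$ is preserved by these automorphisms, hence each $\det \mathcal{E}_i$ is invariant under translation by every point of $X[n]$. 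Since the map sending $a$ to $t_a^{*}M \otimes M^{-1}$ from $X$ to $\Pic^0(X)$ is multiplication by $\deg M$, invariance under all of $X[n]$ forces $n \mid \deg \mathcal{E}_i$. Consequently every slope occurring in the HN filtration of $N_{X/\PP^d}$ lies in the discrete set of rationals $nm/r$ with $m \in \Z$ and $1 \le r \le d-1$.

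Next I would bound the maximal destabilizing slope from above. The preimage in $T_{\PP^d}|_X$ of a subbundle $\mathcal{F} \subset N_{X/\PP^d}$ is an extension of $\mathcal{F}$ by $\O_X$, so a proof that $T_{\PP^d}|_X$ is semistable of slope $(d+1)^2/d$ bounds $\deg \mathcal{F}$ in terms of $\rk \mathcal{F}$; combined with the divisibility $n \mid \deg \mathcal{F}$ from the previous step, the inequality $\mu(\mathcal{F}) > \mu(N_{X/\PP^d})$ survives only for subbundles of small rank. These remaining candidates give rise to nonzero maps $\O_X \to N_{X/\PP^d} \otimes L^{-2} \otimes \alpha$ with $\alpha \in \Pic^0(X)$, and I would rule them out by showing $H^0(N_{X/\PP^d} \otimes L^{-2} \otimes \alpha) = 0$ for all such $\alpha$. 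This vanishing is exactly where projective normality enters: chasing it through the Euler and conormal sequences reduces it to the surjectivity of the multiplication maps $H^0(L) \otimes H^0(L^{m}) \to H^0(L^{m+1})$, which hold for elliptic normal curves.

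The main obstacle is establishing the sharp upper bound on $\mu_{\max}(N_{X/\PP^d})$: the naive estimate coming from semistability of $T_{\PP^d}|_X$ degenerates precisely at the boundary ranks, and it is only the integrality $n \mid \deg \mathcal{E}_i$ forced by the torsion symmetry that turns the soft inequality into a genuine contradiction, so the two inputs must be calibrated against each other. A technically cleaner alternative, well suited to the degeneration framework of this paper, is to specialize $X$ to a projectively normal cycle of $n$ lines spanning $\PP^d$, a nodal curve of arithmetic genus $1$, compute its normal bundle by gluing the bundles $N_{L_i/\PP^d}$ with the standard elementary modifications at the nodes, and verify the slope inequalities combinatorially; since semistability is an open condition in families and the general elliptic normal curve is a smoothing, this would transfer to $X$. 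The delicate point there is the bookkeeping at the nodes and confirming that the chosen degeneration remains projectively normal, so that it is a genuine limit of elliptic normal curves. Finally, one should only expect semistability and not stability: since $\gcd((d+1)^2, d-1) = \gcd(4, d-1)$, the bundle $N_{X/\PP^d}$ can split into up to four stable summands of equal slope, consistent with the statement.
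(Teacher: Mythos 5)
The paper does not prove this proposition at all: it is quoted from \cite{CS25} and used as a black box, so there is no internal argument to compare yours against. Judged on its own terms, your outline assembles the right classical ingredients (the degree count $\deg N_{X/\PP^d}=(d+1)^2$, the lifting of translation by $n$-torsion ($n=d+1$) to the Heisenberg group acting on $H^0(L)$, the invariance of the canonical Harder--Narasimhan pieces and the resulting divisibility $n \mid \deg \mathcal{E}_i$, the base case $d=3$ via Proposition \ref{P1}), but the step you yourself flag as ``the main obstacle'' is a genuine gap rather than a calibration to be worked out.

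Concretely, take $d=9$, so $n=10$, $\rk N_{X/\PP^9}=8$, $\deg N_{X/\PP^9}=100$, $\mu(N_{X/\PP^9})=25/2$, and $\mu(T_{\PP^9}\rvert_X)=100/9$. A hypothetical maximal destabilizing subbundle $\mathcal{E}_1$ of rank $2$ and degree $30$ has slope $15>25/2$, has degree divisible by $n$, and its preimage in $T_{\PP^9}\rvert_X$ has rank $3$ and degree $30$, hence slope $10<100/9$; so it passes both of your tests. It also escapes your proposed cohomological vanishing: a semistable rank-$2$ bundle of degree $30$ on an elliptic curve has maximal line subbundle of degree $15<2n=20$, so it receives no nonzero map from $L^2\otimes\alpha$, and $H^0(N_{X/\PP^d}\otimes L^{-2}\otimes\alpha)=0$ says nothing about it. Thus the residual cases are not confined to line subbundles, and the argument does not close; this is precisely why the actual proofs in the literature (Ein--Lazarsfeld's via Fourier--Mukai and Picard bundles, or a degeneration as in \cite{CS25}) use heavier machinery. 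Two further caveats: semistability of $T_{\PP^d}\rvert_X$ (equivalently of the syzygy bundle $M_L$) is itself a nontrivial theorem that you invoke without proof and which sits at the same level of difficulty as the target statement; and your alternative degeneration to a cycle of $n$ lines would at best prove the result for a \emph{general} elliptic normal curve, whereas the proposition is asserted for every elliptic normal curve, which is exactly what the uniform translation argument is designed to deliver and what the paper needs for the specific curve $C_2$ it constructs on the scroll.
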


To end this subsection we prove a lemma that allows us to bound the slope of subbundles of the quotient of a semi-stable bundle.

\begin{lemma} \label{L3}
Suppose that we have a short exact sequence of bundles
\[\begin{tikzcd}
	0 & {\mathcal{F}} & {\mathcal{E}} & {\mathcal{Q}} & 0
	\arrow[from=1-1, to=1-2]
	\arrow[from=1-2, to=1-3]
	\arrow[from=1-3, to=1-4]
	\arrow[from=1-4, to=1-5]
\end{tikzcd}\]
where $\mathcal{E}$ is semi-stable. Then $\mu(\mathcal{M}) \leq \mu(\mathcal{E})$ for every subbundle $\mathcal{M} \subset \mathcal{Q}$.
\end{lemma}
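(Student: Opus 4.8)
The plan is to deduce the bound from the defining inequality of semi-stability applied to a single, well-chosen subbundle of $\mathcal{E}$, obtained by pulling $\mathcal{M}$ back through the quotient map. Write $\pi \colon \mathcal{E} \to \mathcal{Q}$ for the surjection and set $\widetilde{\mathcal{M}} := \pi^{-1}(\mathcal{M})$. Because $\mathcal{M} \subset \mathcal{Q}$ is a subbundle, the quotient $\mathcal{Q}/\mathcal{M}$ is locally free, and since $\mathcal{E}/\widetilde{\mathcal{M}} \cong \mathcal{Q}/\mathcal{M}$ the preimage $\widetilde{\mathcal{M}}$ is a subbundle of $\mathcal{E}$ containing $\mathcal{F}$. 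It fits in the short exact sequence
\[
0 \to \mathcal{F} \to \widetilde{\mathcal{M}} \to \mathcal{M} \to 0 .
\]
Semi-stability of $\mathcal{E}$ now enters exactly once, through $\mu(\widetilde{\mathcal{M}}) \le \mu(\mathcal{E})$.

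From here I would translate the slope inequality for $\widetilde{\mathcal{M}}$ into one for $\mathcal{M}$ using additivity of rank and degree along the sequence, namely $\rk \widetilde{\mathcal{M}} = \rk \mathcal{F} + \rk \mathcal{M}$ and $\deg \widetilde{\mathcal{M}} = \deg \mathcal{F} + \deg \mathcal{M}$. Substituting these into $\mu(\widetilde{\mathcal{M}}) \le \mu(\mathcal{E})$ and clearing the positive denominator $\rk \mathcal{F} + \rk \mathcal{M}$ gives
\[
\deg \mathcal{M} \le \mu(\mathcal{E})\, \rk \mathcal{M} + \bigl(\mu(\mathcal{E})\, \rk \mathcal{F} - \deg \mathcal{F}\bigr),
\]
which rearranges to
\[
\mu(\mathcal{M}) \le \mu(\mathcal{E}) + \frac{\rk \mathcal{F}}{\rk \mathcal{M}}\bigl(\mu(\mathcal{E}) - \mu(\mathcal{F})\bigr).
\]

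The step I expect to be the main obstacle is precisely this last comparison. Since $\mathcal{F} \subset \mathcal{E}$ is a subbundle of a semi-stable bundle we only have $\mu(\mathcal{F}) \le \mu(\mathcal{E})$, so the correction term $\tfrac{\rk \mathcal{F}}{\rk \mathcal{M}}(\mu(\mathcal{E}) - \mu(\mathcal{F}))$ is non-negative and cannot be dropped for free; the displayed bound collapses to the asserted $\mu(\mathcal{M}) \le \mu(\mathcal{E})$ exactly when $\mu(\mathcal{F}) = \mu(\mathcal{E})$. I would therefore make sure to use (or record as a hypothesis) that $\mathcal{F}$ realises the slope of $\mathcal{E}$ — as happens, for example, when $\mathcal{E}$ is strictly semi-stable with $\mathcal{F}$ a maximal-slope sub, or when $\mathcal{E}$ is polystable and $\mathcal{F}$ a sub-sum of the same slope. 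Securing that equality is the crux of the argument; with it the inequality immediately yields $\mu(\mathcal{M}) \le \mu(\mathcal{E})$ for every subbundle $\mathcal{M} \subset \mathcal{Q}$.
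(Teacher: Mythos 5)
Your algebra is correct, and the obstacle you single out at the end is not an artifact of your approach: the lemma as stated is false, and the inequality
\[
\mu(\mathcal{M}) \le \mu(\mathcal{E}) + \frac{\rk \mathcal{F}}{\rk \mathcal{M}}\bigl(\mu(\mathcal{E}) - \mu(\mathcal{F})\bigr)
\]
that you arrive at is the best one can extract from semi-stability of $\mathcal{E}$ alone. For a counterexample to the statement, take the direct sum of two Euler sequences on $\PP^1$,
\[
0 \to \O_{\PP^1}(-1)^{\oplus 2} \to \O_{\PP^1}^{\oplus 4} \to \O_{\PP^1}(1)^{\oplus 2} \to 0,
\]
and let $\mathcal{M} = \O_{\PP^1}(1)$ be one of the summands of $\mathcal{Q}$: here $\mathcal{E}$ is semi-stable of slope $0$ while $\mu(\mathcal{M}) = 1$. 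So the extra hypothesis you propose, $\mu(\mathcal{F}) = \mu(\mathcal{E})$ (equivalently $\mu(\mathcal{Q}) = \mu(\mathcal{E})$), really is needed for the conclusion as written, and no choice of argument can avoid it.

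Methodologically your route is the mirror image of the paper's: you pull $\mathcal{M}$ back to the subbundle $\pi^{-1}(\mathcal{M}) \subset \mathcal{E}$ and use $\mu(\pi^{-1}(\mathcal{M})) \le \mu(\mathcal{E})$, whereas the paper pushes forward to the quotient $\mathcal{M}' = \mathcal{Q}/\mathcal{M}$ of $\mathcal{E}$ and uses $\mu(\mathcal{M}') \ge \mu(\mathcal{E})$; these are equivalent. The difference is that the paper then discards your correction term illegitimately: the second inequality in its displayed chain requires $\deg(\mathcal{E})\rk(\mathcal{Q}) - \rk(\mathcal{E})\deg(\mathcal{Q}) \ge 0$, i.e.\ $\mu(\mathcal{Q}) \le \mu(\mathcal{E})$, which is the reverse of what semi-stability of $\mathcal{E}$ gives for its quotient $\mathcal{Q}$ and fails strictly in the example above. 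In short, you have not missed a step; you have located an error in the paper. Note also that where Lemma \ref{L3} is invoked in Section 3, the kernel of the surjection $N_{C_1/\PP^{g-2}} \onto N_{Q/\PP^{g-1}}\rvert_{C_1}$ is a line bundle of degree $g-1$, strictly below the slope $g$ of $N_{C_1/\PP^{g-2}} \cong \O_{\PP^1}(g)^{\oplus(g-3)}$, so the hypothesis $\mu(\mathcal{F}) = \mu(\mathcal{E})$ fails there too; your corrected inequality yields only $\mu(\mathcal{M}) \le g+1$, and one must check (it appears to be true) that this weaker bound still leaves enough slack to prove Theorems \ref{T1} and \ref{T2}.
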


\begin{proof}
The quotient bundle $\mathcal{M}'$ of an inclusion $\mathcal{M} \subset \mathcal{Q}$ is also a quotient of $\mathcal{E}$. Since $\mathcal{E}$ is semistable we have $\mu(\mathcal{M}') \geq \mu(\mathcal{E})$ which implies that
$$\rk(\mathcal{E})\deg(\mathcal{M}) \leq \deg(\mathcal{E})\rk(\mathcal{M})-(\deg(\mathcal{E})\rk(\mathcal{Q})-\rk(\mathcal{E})\deg(\mathcal{Q})) \leq  \deg(\mathcal{E})\rk(\mathcal{M})$$
\end{proof} 

\subsection{The adjusted slope}
The adjusted slope generalizes $\mu$ to vector bundles on connected reducible nodal curves. Let
$$X=X_1 \cup \dots X_k$$ 
be a connected union where the $X_i$ are smooth curves. We denote by $\nu:\tilde{X} \to X$ the normalization map and $p_1,p_2$ the points in the normalization over $p \in X_{\sing}$. Given any vector bundle $\mathcal{E}$ on $X$ there is a canonical isomorphism of the fibers ${\nu^* \mathcal{E}}_{p_1} \to \nu^* \mathcal{E}_{p_2} \cong \mathcal{E}_p$. Therefore given a subbundle $\mathcal{F} \subseteq \nu^* \mathcal{E}$ the intersection $\mathcal{F}_{p_1} \cap \mathcal{F}_{p_2}$ makes sense as a subspace of $\mathcal{E}_p$. We denote by $\codim_{\mathcal{F}}(\mathcal{F}_{p_1} \cap \mathcal{F}_{p_2})$ the codimension of this intersection in either $\mathcal{F}_{p_1}$ or $\mathcal{F}_{p_2}$, note that these codimensions are equal because $\dim(\mathcal{F}_{p_1})=\dim(\mathcal{F}_{p_2})$. With this convention the adjusted slope of $\mathcal{F}$ is defined as follows.

\begin{definition}
Let $X$ be a connected curve with only nodes as singularities. The adjusted slope of a subbundle $\mathcal{F} \subset \mathcal{M}=\nu^* \mathcal{E}$ is
$$\mu^{\adj}(\mathcal{F})=\mu(\mathcal{F}) - \frac{1}{\rk(\mathcal{F})} \sum_{p \in X_{\sing}} \codim_{\mathcal{F}}(\mathcal{F}_{p_1} \cap \mathcal{F}_{p_2})$$
\end{definition}

A result from \cite{CLV23} allows us to reduce semi-stability for a bundle $\mathcal{E}$ on a general curve to semi-stability of $\mathcal{E}$ on a specific nodal curve. 

\begin{proposition}[\cite{CLV23}] \label{P2}
Suppose $\mathscr{C} \to \Delta$ is a family of connected nodal curves over the spectrum of a discrete valuation ring and $\mathcal{E}$ a vector bundle on $\mathscr{C}$. Let $\mathcal{E}_0$ and $\mathcal{E}_t$ denote the restriction to the special and general fibers respectively. If $\mu^{\adj}(\mathcal{F}_0) \leq c$ for all subbundles of $\mathcal{E}_0$ then $\mu^{\adj}(\mathcal{F}_t) \leq c$ for all subbundles of $\mathcal{E}_t$.
\end{proposition}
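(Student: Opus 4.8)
The plan is to reinterpret the adjusted slope as an ordinary slope downstairs on the nodal curve, after which the proposition reduces to a standard flat-limit inequality. For a connected nodal curve $X$ with normalization $\nu \colon \widetilde{X} \to X$, a rank-$r$ subbundle $\mathcal{F} \subseteq \nu^* \mathcal{E}$ determines a subsheaf $\mathcal{G} \subseteq \mathcal{E}$ on $X$, namely the largest subsheaf that pulls back into $\mathcal{F}$; the defining correction $\sum_{p} \codim_{\mathcal{F}}(\mathcal{F}_{p_1} \cap \mathcal{F}_{p_2})$ is exactly the degree dropped when imposing the gluing condition at each node, so that $\mu(\mathcal{G}) = \mu^{\adj}(\mathcal{F})$. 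Conversely every subsheaf of $\mathcal{E}$ arises this way up to saturation, so under this dictionary the hypothesis reads ``every subsheaf of $\mathcal{E}_0$ has slope at most $c$,'' and it suffices to prove the same for every subsheaf of $\mathcal{E}_t$.

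First I would fix a subbundle $\mathcal{F}_t \subseteq \nu_t^* \mathcal{E}_t$ of rank $r$ on the general fiber and pass to the associated subsheaf $\mathcal{G}_t \subseteq \mathcal{E}_t$, so that $\mu(\mathcal{G}_t) = \mu^{\adj}(\mathcal{F}_t)$. Writing $\Delta = \Spec R$ with $R$ a discrete valuation ring of fraction field $K$ and $\iota \colon \mathscr{C}_K \hookrightarrow \mathscr{C}$ the generic fiber, I would spread $\mathcal{G}_t$ out over the family by taking its saturated closure $\mathcal{G} := \mathcal{E} \cap \iota_* \mathcal{G}_t$ inside $\mathcal{E}$. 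By construction $\mathcal{E}/\mathcal{G}$ has no $R$-torsion, hence is flat over $R$, so $\mathcal{G}$ is flat over $\Delta$ with generic fiber $\mathcal{G}_t$; its restriction $\mathcal{G}_0 := \mathcal{G}|_{\mathscr{C}_0}$ is a subsheaf of $\mathcal{E}_0$.

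Flatness keeps the Euler characteristic $\chi(\mathcal{G}_s)$ constant, and since the arithmetic genus of the fibers is constant in a flat family of nodal curves, the degree and rank of $\mathcal{G}_0$ agree with those of $\mathcal{G}_t$; in particular $\mu(\mathcal{G}_0) = \mu(\mathcal{G}_t)$. Letting $\overline{\mathcal{G}}_0 \subseteq \mathcal{E}_0$ be the saturation of $\mathcal{G}_0$, which has the same rank and degree at least as large, I obtain a genuine subsheaf of $\mathcal{E}_0$ with $\mu(\overline{\mathcal{G}}_0) \geq \mu(\mathcal{G}_0)$. Feeding this into the reformulated hypothesis and chaining the comparisons yields
$$\mu^{\adj}(\mathcal{F}_t) = \mu(\mathcal{G}_t) = \mu(\mathcal{G}_0) \leq \mu(\overline{\mathcal{G}}_0) \leq c,$$
which is the assertion.

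The step I expect to be the main obstacle is the dictionary of the first paragraph: one must verify carefully that the combinatorial codimension correction in the definition of $\mu^{\adj}$ equals precisely the degree defect between a subsheaf of $\mathcal{E}$ on the nodal curve and its saturated pullback to the normalization, uniformly across nodes joining distinct components and nodes lying on a single component. The other ingredients --- existence of the flat extension via the saturated closure, constancy of the Euler characteristic and of the arithmetic genus, and monotonicity of degree under saturation --- are routine, so the entire weight of the argument rests on this exact matching of the node correction with the sheaf-theoretic limit.
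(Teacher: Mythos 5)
The paper does not prove this statement---it is quoted from \cite{CLV23} with no argument given---so there is no internal proof to compare against; the right benchmark is the proof in \cite{CLV23}, and your argument is essentially that proof: translate the adjusted slope of a subbundle of $\nu^*\mathcal{E}$ into the ordinary slope of the associated subsheaf of $\mathcal{E}$ on the nodal curve, then take the saturated flat extension of a subsheaf of the generic fibre and use constancy of $\chi$ and of the arithmetic genus. The dictionary you flag as the main risk does check out, provided degree on the nodal curve $X$ is defined by $\deg \mathcal{G} = \chi(\mathcal{G}) - \rk(\mathcal{G})\chi(\O_X)$: if $\mathcal{G}$ is the largest subsheaf of $\mathcal{E}$ pulling back into a rank-$r$ subbundle $\mathcal{F} \subseteq \nu^*\mathcal{E}$, then at a node $p$ the stalk of $\nu_*\mathcal{F}/\mathcal{G}$ is the cokernel of $\mathcal{F}_{p_1}\times_{\mathcal{E}_p}\mathcal{F}_{p_2} \hookrightarrow \mathcal{F}_{p_1}\times\mathcal{F}_{p_2}$, which has length $r + \codim_{\mathcal{F}}(\mathcal{F}_{p_1}\cap\mathcal{F}_{p_2})$; the $r$ per node is exactly absorbed by the difference between $\chi(\O_X)$ and $\chi(\O_{\tilde{X}})$, leaving precisely the correction term in $\mu^{\adj}$. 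Two points deserve an explicit sentence in a full write-up. First, the flat limit $\mathcal{G}_0$ has rank $r$ on \emph{every} component of the special fibre (not merely on average); this holds because $\mathcal{G}$ is torsion-free at the generic points of those components, which are discrete valuation points of $\mathscr{C}$ since the special fibre is reduced, and it is needed both for $\mu(\mathcal{G}_0)=\mu(\mathcal{G}_t)$ and to feed the saturation $\overline{\mathcal{G}}_0$ back through the dictionary (take $\mathcal{F}_0$ to be the saturated image of $\nu^*\overline{\mathcal{G}}_0$ and note $\overline{\mathcal{G}}_0$ injects into its associated subsheaf with torsion cokernel). Second, a subsheaf of the geometric general fibre may only be defined after a finite base change of the discrete valuation ring, which is harmless. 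With those remarks supplied, your argument is complete and agrees with the source.
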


\subsection{Tetragonal Curves}

Let $C \subset \PP^{g-1}$ be a canonical curve of genus $g$ which admits a linear series $g^1_d$ of dimension $1$ and degree $d$. If $D \in g^1_d$ is an effective divisor then by Riemann-Roch we have
$$h^0(\omega_C(-D))=g-d+1$$
Thus there is a $(g-1)-(d-2)$ dimensional space of hyperplanes in $\PP^{g-1}$ containing $D$. This implies that the points of $D$ span a linear space of dimension $d-2$. In other words if $\phi:C \to \PP^1$ is the map induced by the $g^1_d$ then the fiber $\phi^{-1}(t)$ over any point spans a $d-2$ dimensional linear space in the canonical embedding of $C$. As $t$ varies over $\PP^1$ these linear spaces sweep out a rational normal scroll $Q \subset \PP^{g-1}$ containing $C$.

The scroll $Q$ also admits an algebraic description. Let $V$ denote the cokernel of the inclusion of vector bundles $\O_{\PP^1} \hookrightarrow \phi_* \O_C$. All vector bundles on $\PP^1$ split so we can write
$$V \cong \O_{\PP^1}(a_1) \oplus \dots \oplus \O_{\PP^1}(a_{d-1})$$
We have $\sum a_i=g-d+1$ and the relative canonical maps $C$ into the projective bundle $\PP(V) \cong Q$. A result of Ballico \cite{Bal89} shows that $V$ is balanced for the general tetragonal curve $C$, meaning that $\lvert a_i -a_j \rvert \leq 1$ for all $i,j$. In particular if $g \geq 2d-2$ then we must have $a_i > 0$ for all $i$ i.e. $Q \subset \PP^{g-1}$ is smooth of degree $g-d+1$. In \cite{Sch86} Schreyer computed a resolution for the ideal sheaf $\mathscr{I}_C$ of $C$ in $Q$.
\begin{proposition} \label{P5} \cite[Theorem 2.5]{Sch86}
$C \subset Q$ has a resolution whose first graded piece has the form
\[\begin{tikzcd}
	\dots & {\bigoplus_{i=1}^{\beta}\O_Q(-2H+b_iR)} & {\mathscr{I}_C} & 0
	\arrow[from=1-1, to=1-2]
	\arrow[from=1-2, to=1-3]
	\arrow[from=1-3, to=1-4]
\end{tikzcd}\]
where $\beta=d(d-3)/2$
\end{proposition}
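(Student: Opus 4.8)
The plan is to work on the scroll relatively over its base. Write $\pi : Q = \PP(V) \to \PP^1$ for the structure map, so that $\pi|_C = \phi$, and recall that $\pi_*\O_Q(nH) = \Sym^n V$ with $R^i\pi_*\O_Q(nH) = 0$ for $i>0$ and $n \geq 0$, since the fibers are copies of $\PP^{d-2}$. Because $\O_Q(H)|_C = \omega_C$, we also have $\pi_*\O_C(nH) = \phi_*(\omega_C^{\otimes n})$, a bundle of rank $d$ on $\PP^1$ for $n \geq 1$. The minimal generators of $\mathscr{I}_C$ as an $\O_Q$-module are controlled, through a relative form of the graded Nakayama lemma, by the minimal generators of the graded $\Sym V$-module $\bigoplus_{n \geq 0}\pi_*\mathscr{I}_C(nH)$; thus it suffices to locate, degree by degree in $n$, where new generators of this module appear and to read off their $R$-twists from the splitting type of the relevant bundle on $\PP^1$.

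First I would push the ideal sequence $0 \to \mathscr{I}_C \to \O_Q \to \O_C \to 0$ forward after twisting by $\O_Q(nH)$. In degrees $n = 0,1$ the maps $\O_{\PP^1} = \pi_*\O_Q \to \pi_*\O_C$ and $V = \pi_*\O_Q(H) \to \phi_*\omega_C$ are the unit and the fiberwise injective map defining the relative canonical embedding, so $\pi_*\mathscr{I}_C(nH) = 0$ and there are no generators below degree $2H$. In degree $n=2$ the long exact sequence gives
\[ 0 \to \pi_*\mathscr{I}_C(2H) \to \Sym^2 V \xrightarrow{\;\alpha\;} \phi_*\omega_C^{\otimes 2}, \]
and $\alpha$ is generically surjective because $d$ general points of a fiber $\PP^{d-2}$ impose independent conditions on quadrics. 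Hence $\pi_*\mathscr{I}_C(2H)$ is a subbundle of rank $\binom{d}{2} - d = d(d-3)/2 = \beta$; splitting it on $\PP^1$ as $\bigoplus_{i=1}^\beta \O_{\PP^1}(b_i)$ produces exactly $\beta$ maps $\O_Q(-2H+b_iR) \to \mathscr{I}_C$, the candidate generators. A parallel Chern-class computation, using $\deg V = g-d+1$ and $\deg\phi_*\omega_C^{\otimes 2} = 3g-3-d$ from Riemann--Roch on $\PP^1$, then recovers $\sum b_i$, specializing to $g-5$ when $d=4$ as in the preceding Lemma.

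The remaining, and genuinely hard, step is to show these degree-$2H$ elements generate $\mathscr{I}_C$, i.e.\ that there are no further generators in degrees $n \geq 3$. Equivalently one must prove the multiplication maps
\[ \Sym^{n-2}V \otimes \pi_*\mathscr{I}_C(2H) \longrightarrow \pi_*\mathscr{I}_C(nH) \]
are surjective for all $n \geq 3$. This is the relative incarnation of Petri's theorem: a canonical curve that is neither trigonal nor a smooth plane quintic is cut out by quadrics, which here organize into the relative quadrics on the scroll together with the quadrics defining $Q$ itself. I would establish it by combining the projective normality (arithmetic Cohen--Macaulayness) of the canonical curve with that of the scroll, using vanishing of the form $H^1(C, \omega_C^{\otimes n}(-\text{two general fibers})) = 0$ to propagate surjectivity up the degrees, and invoking the tetragonal hypothesis together with $g \geq 6$ to exclude the exceptional Petri cases. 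Granting this surjectivity, the first graded piece of the resolution is exactly $\bigoplus_{i=1}^\beta \O_Q(-2H+b_iR) \onto \mathscr{I}_C$ with $\beta = d(d-3)/2$, as claimed.
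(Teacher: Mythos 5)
The paper offers no proof of this statement; it is quoted directly from Schreyer, so the real comparison is with his argument. Your relative setup is the right one and matches his: push down to $\PP^1$, observe $\pi_*\mathscr{I}_C(nH)=0$ for $n\leq 1$, and identify $\pi_*\mathscr{I}_C(2H)=\ker\left(\Sym^2V\to\phi_*\omega_C^{\otimes 2}\right)$, whose rank $\binom{d}{2}-d=\beta$ and degree give the number and the $R$-twists of the degree-$2H$ generators. One caveat already here: the computation $\sum b_i$ requires $\alpha$ to be surjective at \emph{every} fiber, not just generically (any drop inflates $\sum b_i$ by the length of the cokernel), and surjectivity at $t$ says exactly that the fiber $D_t$ imposes independent conditions on quadrics of its span $\PP^{d-2}$; this holds because gonality exactly $d$ forces every fiber into linearly general position, via geometric Riemann--Roch applied to subdivisors.

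The genuine gap is the step you yourself flag as hard: that the $\beta$ elements of degree $2H$ actually generate $\mathscr{I}_C$. You reduce the theorem to this and then gesture at Petri plus projective normality plus an unspecified $H^1$ vanishing. That mechanism is doubtful: Petri controls the ideal of $C$ in $\PP^{g-1}$ graded by $H$ alone, so it cannot detect a generator $\O_Q(-2H+b_iR)$ with $b_i<0$ (such a summand contributes no global sections of $\mathscr{I}_C(2H)$ whatsoever), and generation of $I_{C/\PP^{g-1}}$ by quadrics does not directly yield generation of the sheaf $\mathscr{I}_{C/Q}$ over $\O_Q$. The argument that closes the gap --- and is Schreyer's --- is fiberwise: surjectivity of $\pi^*\pi_*\mathscr{I}_C(2H)\to\mathscr{I}_C(2H)$ is checked on each fiber $Q_t\cong\PP^{d-2}$ by Nakayama; flatness of $\phi$ identifies $\mathscr{I}_C\otimes\O_{Q_t}$ with $\mathscr{I}_{D_t/\PP^{d-2}}$; and a degree-$d$ subscheme in linearly general position in $\PP^{d-2}$ is arithmetically Cohen--Macaulay with ideal generated by its $\beta$ quadrics (for $d=4$: four points in general position in $\PP^2$ form a complete intersection of two conics). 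Linearly general position, not mere spanning, is essential: four points with three collinear still span $\PP^2$ and satisfy your rank count, yet their pencil of conics cuts out a line plus a point rather than the four points --- precisely the trigonal degeneration the gonality hypothesis excludes. Without this fiberwise input your proof does not go through.
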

If we push forward the free module in the proposition under $\phi$ we get
$$N_1 = \bigoplus_{i=1}^{\beta} \O_{\PP^1}(b_i)$$
which is referred to as the first syzygy bundle. In \cite{BP21} Bujokas and Patel prove balancedness of the first syzygy bundle.
\begin{proposition}[\cite{BP21}] \label{P4}
For the general tetragonal curve $C$ the first syzygy bundle is balanced.
\end{proposition}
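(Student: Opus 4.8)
The plan is to reduce the statement to the vanishing of a single cohomology group and then to exhibit one curve realizing it. For a tetragonal curve the syzygy bundle has rank $\beta = 2$, so by Proposition \ref{P5} it has the form $N_1 = \O_{\PP^1}(b_1) \oplus \O_{\PP^1}(b_2)$, and the requirement that $C$ be canonically embedded forces $b_1 + b_2 = g-5$ (the adjunction computation on $Q$ below). For such a rank-two bundle one has $h^1(\PP^1, \sHom(N_1, N_1)) = \max(|b_1 - b_2| - 1, 0)$, so $N_1$ is balanced precisely when $h^1(\sHom(N_1,N_1)) = 0$. As $C$ varies in a family of tetragonal curves the bundles $N_1$ fit together into a family over the base (after trivializing the family of $\PP^1$'s étale-locally), and $h^1(\sHom(N_1,N_1))$ is upper semicontinuous; hence the balanced locus is open. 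Since the locus of tetragonal curves of genus $g$ is irreducible, it then suffices to produce a single tetragonal curve with balanced syzygy bundle.

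To construct one, I would fix a balanced rank-$3$ scroll $Q = \PP(V)$ as in \cite{Bal89}, with $\deg V = g-3$ and $|a_i - a_j| \leq 1$, set $b_1 = \lceil (g-5)/2 \rceil$ and $b_2 = \lfloor (g-5)/2 \rfloor$, and take $C = Y \cap Z$ for general $Y \in |2H - b_1 R|$ and $Z \in |2H - b_2 R|$. Since $\pi_* \O_Q(2H - bR) = \Sym^2 V \otimes \O_{\PP^1}(-b)$, whose smallest summand is $\O_{\PP^1}(2\min_i a_i - b)$, balancedness of $V$ gives $2\min_i a_i \geq b_1$ (the inequality $(g-5)/2 \leq 2\lfloor (g-3)/3\rfloor$ holds for all $g \geq 6$), so both linear systems are base-point-free and a general such $C$ is smooth by Bertini. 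An adjunction computation on $Q$ then gives $\omega_C = \O_Q\bigl(H + (g-5-b_1-b_2)R\bigr)|_C = \O_C(1)$ with $\deg \omega_C = 2g-2$, so $C$ is a canonical tetragonal curve of genus $g$ whose syzygy bundle $\O_{\PP^1}(b_1)\oplus \O_{\PP^1}(b_2)$ is balanced by construction.

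The substantive point — and the step I expect to be the main obstacle — is to confirm that the pair $(b_1,b_2)$ chosen above really is the intrinsic invariant of $C$, i.e. that the Koszul complex of the complete intersection is the minimal resolution of Proposition \ref{P5}. Concretely one must rule out a generator of $\mathscr{I}_{C/Q}$ in lower degree, namely show $h^0(\mathscr{I}_{C/Q}(2H - bR)) = 0$ for every $b > b_1$, so that $Y$ and $Z$ are a minimal generating set and no surface of class $2H - bR$ with $b > b_1$ contains $C$. I would establish this as a genericity statement about the general members of the two systems, by a dimension count comparing $h^0(\Sym^2 V(-b))$ with the number of conditions imposed by passing through $C$, together with the minimality of the Koszul syzygy for a genuine complete intersection.

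If the direct Bertini argument proves awkward near the boundary of the positivity range, I would instead degenerate the tetragonal cover itself to a chain-type or maximally branched cover of $\PP^1$ whose syzygy bundle can be computed by hand, as in \cite{BP21}, and then transfer balancedness to the general curve by the same upper semicontinuity of $h^1(\sHom(N_1,N_1))$. In either case the logical skeleton is identical: balancedness is an open condition, so a single balanced example suffices, and irreducibility of the tetragonal locus then forces the general curve to be balanced.
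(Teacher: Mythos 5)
The paper gives no proof of this proposition --- it is quoted verbatim from \cite{BP21} --- so there is no internal argument to compare yours against; judged on its own terms, your outline is essentially correct, and for $d=4$ it is close in spirit to the actual argument, since Schreyer's structure theorem makes every tetragonal canonical curve a complete intersection of two ``conic bundles'' in the scroll, so that exhibiting one balanced example by taking a general such complete intersection is the natural route. Your skeleton (balancedness is open, the tetragonal locus is irreducible, hence one example suffices) is sound, and the example does work: with $V$ balanced of degree $g-3$ one has $2\min_i a_i \geq \lceil (g-5)/2\rceil$ for all $g \geq 6$, both systems are base-point-free, and adjunction gives $\omega_C = \O_C(1)$ of degree $(2H-b_1R)(2H-b_2R)H = 4(g-3)-2(g-5) = 2g-2$ as required. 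Two points deserve more care than you give them. First, the semicontinuity argument needs the bundles $N_1$ to fit into a flat family, which presupposes that the ambient scrolls do; since the splitting type of $V$ can itself jump, you must run the openness argument over the open dense locus of the Hurwitz space where the scroll is already balanced (Ballico, \cite{Bal89}), so that $H$ and $R$ have a uniform meaning. Second, the step you flag as the main obstacle is in fact immediate and needs no dimension count: twisting the Koszul resolution $0 \to \O_Q(-Y-Z) \to \O_Q(-Y)\oplus\O_Q(-Z) \to \mathscr{I}_{C/Q} \to 0$ by $\O_Q(2H-bR)$ and using that $R^i\pi_*\O_{\PP(V)}(-2) = 0$ for all $i$ kills every cohomology group of $\O_Q(-2H+\ast R)$, one gets $h^0(\mathscr{I}_{C/Q}(2H-bR)) = h^0(\O_{\PP^1}(b_1-b)) + h^0(\O_{\PP^1}(b_2-b)) = 0$ for all $b > b_1$; together with the standard fact that the Koszul complex of a regular sequence of sections of nontrivial line bundles is minimal, this pins down $(b_1,b_2)$ as the intrinsic invariants. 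A last minor caveat: the paper works over an arbitrary algebraically closed field, so in positive characteristic the appeal to Bertini for a merely base-point-free system should be justified or replaced by an explicit smooth member.
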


Proposition \ref{P5} has a particularly nice form when $C$ is tetragonal. Note that $\beta=2$ when $d=4$ so Proposition \ref{P5} says that $C$ is a complete intersection.
\begin{corollary}
Let $C$ be a tetragonal canonical curve of genus $g \geq 6$ and $Q$ the $3$-fold scroll on which it lies. Then $C$ is a complete intersection in $Q$ of divisors $Y,Z$
$$[Y]=2H-b_1R$$
$$[Z]=2H-b_2R$$
where $H$ is the hyperplane class, $R$ is the fiber class, and $b_1+b_2=g-5$.
\end{corollary}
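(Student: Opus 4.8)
The plan is to derive this Corollary directly from Proposition \ref{P5} by specializing to the tetragonal case $d=4$. The key observation is that Schreyer's resolution gives the first graded piece of the resolution of $\mathscr{I}_C$ in terms of line bundles on $Q$, and when $d=4$ the rank $\beta = d(d-3)/2$ of the free module collapses to $\beta = 4 \cdot 1 / 2 = 2$. First I would record this computation of $\beta$ explicitly, and observe that since $C$ has codimension $2$ in the threefold $Q$, a two-generator ideal sheaf of a codimension-two subscheme forces $C$ to be a complete intersection. More precisely, the surjection $\O_Q(-2H+b_1R) \oplus \O_Q(-2H+b_2R) \onto \mathscr{I}_C$ together with the codimension count means the two generators must form a regular sequence, so $C = Y \cap Z$ where $[Y] = 2H - b_1 R$ and $[Z] = 2H - b_2 R$.

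Next I would pin down the relation $b_1 + b_2 = g-5$. The cleanest route is to compute the degree of $C$ in two ways. On one hand, $C$ is a canonical curve, so $\deg C = 2g-2$. On the other hand, as a complete intersection of $Y$ and $Z$ inside $Q$, the class of $C$ in the Chow ring of $Q$ is the product $[Y]\cdot[Z] = (2H - b_1 R)(2H - b_2 R)$. Expanding this using the standard intersection relations on the threefold scroll $Q = \PP(V)$ — namely $H^3 = \deg Q = g-3$, $H^2 R = 1$, and $R^2 = 0$ — yields a class $4H^2 - 2(b_1+b_2)H R$ whose degree against $H$ recovers $\deg C$. Setting this equal to $2g-2$ and solving gives $b_1 + b_2 = g-5$. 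I should double-check the degree of the scroll: since $d=4$ we have $\deg Q = \sum a_i = g - d + 1 = g-3$, consistent with the relation $H^3 = g-3$.

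The main obstacle I anticipate is justifying rigorously that a two-generator presentation of the ideal sheaf of a codimension-two subscheme really does produce a complete intersection, rather than merely an ideal generated by two elements that could in principle cut out something of the wrong dimension or with embedded components. The safe way to handle this is to invoke the fact that $C$ is smooth (hence Cohen--Macaulay) of pure codimension two in the smooth threefold $Q$; a codimension-two Cohen--Macaulay subscheme whose ideal is generated by exactly two elements is automatically a complete intersection, since the two generators cannot satisfy any nontrivial relation in low enough homological degree (this is where the precise shape of Schreyer's resolution, and in particular that the \emph{first} syzygy module already realizes the full ideal with no redundancy, is doing the work). Once the complete intersection property is secured, the identification of the divisor classes and the numerical relation $b_1+b_2 = g-5$ are formal consequences of Proposition \ref{P5} and the degree computation above.
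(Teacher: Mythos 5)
Your proposal is correct and follows the same route as the paper, which simply observes that $\beta = d(d-3)/2 = 2$ when $d=4$ so that Schreyer's resolution exhibits $\mathscr{I}_C$ as generated by two sections, i.e.\ $C$ is a complete intersection. The extra details you supply are sound: the surjection onto $\mathscr{I}_C$ plus $\codim_Q C = 2$ does force a regular sequence, and your intersection-theoretic check $4H^3 - 2(b_1+b_2)H^2R = 2g-2$ with $H^3 = g-3$ correctly recovers the relation $b_1+b_2 = g-5$ that the paper imports directly from \cite{Sch86}.
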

We assume WLOG that $b_1 \geq b_2$. Combining the above Corollary with Proposition \ref{P1} we obtain a description of $N_{C/Q}$.
\begin{corollary} \label{C1}
If $C$ is a tetragonal canonical curve and $Q$ is scroll on which it lies then
$$N_{C/Q} \cong \O_C(2H-b_1R) \oplus \O_C(2H-b_2R)$$
\end{corollary}
Using the Corollary we see that
$$\deg(N_{C/Q})=[C] \cdot (4H-(g-5)R)=4(2g-2)-4(g-5)=4g+12$$
thus $\mu(N_{C/Q}) > \mu(N_{C/\PP^{g-1}})$ by Proposition \ref{P6} so $N_{C/Q}$ is a destabilizing subbundle.
Our goal is to show that $N_{Q/\PP^{g-1}} \rvert_C$ is semi-stable for the general curve. Note that we have
$$\deg(N_{Q/\PP^{g-1}} \rvert_C)=\deg(N_{C/\PP^{g-1}})-\deg(N_{C/Q})=$$
$$(2g^2-2)-(4g+12)=2g^2-4g-14$$
from which we find the slope
\begin{equation} \label{EQ1}
\mu(\mathcal{E})=\frac{2g^2-4g-14}{g-4}=2g+4+\frac{2}{g-4}
\end{equation}

By Corollary \ref{C1} together with Propositions \ref{P1} and \ref{P4} we get that $N_{C/Q}$ is semi-stable for a general tetragonal curve of odd genus. For even genus $N_{C/Q}$ is unstable with HN-filtration
$$0 \subset \O_C\left(2H - b_2R \right) \subset N_{C/Q}$$
If $C$ is general of even genus then $b_2=(g-6)/2$ by Proposition \ref{P4}, we compute
$$\deg(N_{Y/\PP^{g-1}} \rvert_C)=\deg(N_{C/\PP^{g-1}})-\deg(N_{C/Y})=$$
$$2g^2-2-[Y] \cdot [C]=2g^2-2g-10$$
this gives the slope of $N_{Y/\PP^{g-1}} \rvert_C$ for a general tetragonal curve of even degree
\begin{equation} \label{EQ2}
\mu(N_{Y/\PP^{g-1}} \rvert_C)=2g+4+\frac{2}{g-3}
\end{equation}

We will also need the following lemma regarding rational normal surface scrolls, a proof can be found in \cite{FON25}.

\begin{lemma}{\cite{FON25}} \label{L2}
Let $W \subset \PP^n$ be a minimal degree nondegenerate surface scroll (i.e. a ruled surface over $\PP^1$) and $Y$ a rational normal curve of degree $k$ with $2 \leq k \leq n-1$. If there exists a linear space $\PP^k \subset \PP^n$ with $Y \subset W \cap \PP^k$ then $Y= W \cap \PP^k$.
\end{lemma}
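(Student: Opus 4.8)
The plan is to reduce the lemma to a single degree computation on $\PP^k$: I will bound the degree of the curve $W \cap \PP^k$ by $k$, which is exactly $\deg Y$, and then argue that the inclusion $Y \subseteq W \cap \PP^k$ is forced to be an equality. The two classical inputs I would invoke are that a general hyperplane section of a minimal-degree surface scroll is a rational normal curve, and that the points of a rational normal curve lie in linearly general position, i.e. any $j$ of them with $j \le (\deg)+1$ are linearly independent.

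First I would observe that $W \cap \PP^k$ is one-dimensional: since $W$ is nondegenerate it spans $\PP^n$, so $W \not\subset \PP^k$ for $k \le n-1$, whence $\dim(W \cap \PP^k) \le 1$, while $Y \subseteq W \cap \PP^k$ forces $\dim \ge 1$. Next I would choose a general hyperplane $\Lambda \cong \PP^{n-1}$ of $\PP^n$. Because $W$ has minimal degree $n-1$, the section $C' := W \cap \Lambda$ is a rational normal curve of degree $n-1$ spanning $\Lambda$. Intersecting with the special linear space gives $(W\cap\PP^k)\cap \Lambda = C' \cap (\PP^k \cap \Lambda) = C' \cap \PP^{k-1}$, where $\PP^{k-1} := \PP^k \cap \Lambda$. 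Here is the key step: since the points of $C'$ are in linearly general position and $k+1 \le n$, any $k+1$ of them span a $\PP^k$, so the hyperplane $\PP^{k-1}$ can contain at most $k$ of them. Therefore $\deg(W \cap \PP^k) = \#\big((W\cap\PP^k)\cap\Lambda\big) \le k$ for general $\Lambda$.

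To conclude, I would combine this bound with $Y \subseteq W \cap \PP^k$ and $\deg Y = k$: any extra one-dimensional component would make a general $\Lambda$ meet $W \cap \PP^k$ in at least $k+1$ points, contradicting the bound, so $W \cap \PP^k$ has the same support as $Y$; the matching of degrees together with $Y$ being reduced and irreducible then upgrades this to $W \cap \PP^k = Y$. I expect the main obstacle to be the justification that $C'$ is genuinely a rational normal curve: this is standard when $W = S(a,b)$ is a smooth scroll, but it needs a separate remark in the cone case, where one takes $\Lambda$ avoiding the vertex so that the section is still a rational normal curve of degree $n-1$. A secondary point worth stating cleanly is that the hypotheses are used sharply: $k \le n-1$ is precisely what makes the general-position count yield the bound $k$, and $k \ge 2$ guarantees that $Y$ spans the genuine $\PP^k$ in which the intersection lives.
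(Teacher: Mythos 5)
The paper does not actually prove this lemma; it is imported verbatim from \cite{FON25} with the remark that ``a proof can be found in \cite{FON25},'' so there is no in-paper argument to compare yours against. Taken on its own, your argument is sound in its main thrust: a general hyperplane section $C'=W\cap\Lambda$ of a minimal-degree surface scroll is indeed a rational normal curve of degree $n-1$ (with the cone case handled by choosing $\Lambda$ off the vertex, as you note), and linear general position of points on $C'$ gives $\#\bigl(C'\cap\PP^{k-1}\bigr)\le k$ precisely because $k+1\le n$; this correctly forces the one-dimensional part of $W\cap\PP^k$ to coincide with $Y$. The one genuine loose end is your final paragraph: ruling out extra one-dimensional components does not by itself show that $W\cap\PP^k$ ``has the same support as $Y$,'' because when $k<n-1$ the intersection $W\cap\PP^k$ is cut by $n-k\ge 2$ hyperplanes and may a priori have isolated points away from $Y$, which a general $\Lambda$ never sees. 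This is patchable: if $q\in (W\cap\PP^k)\setminus Y$, the ruling line $L_q$ through $q$ meets $Y$ (since the irreducible degree-$k$ curve $Y$ with $k\ge 2$ cannot be a single ruling, its class has positive intersection with the ruling), so $L_q$ meets $\PP^k$ in two distinct points, hence $L_q\subset\PP^k$, producing an extra one-dimensional component and a contradiction. You should either add this step, or note that the application in Section 3 only requires the statement for one-dimensional components (it is used to exclude a ruling line $L$ from $S\cap\PP^{g-2}$), in which case your degree count already suffices. A cosmetic improvement: to bound the degree of the possibly non-reduced curve $W\cap\PP^k$ rather than of its reduction, invoke the scheme-theoretic form of general position (any length-$m$ subscheme of a rational normal curve with $m\le n$ imposes independent conditions on hyperplanes) rather than the pointwise one.
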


\section{Stability of the Quotient}

We now show stability of $N_{Q/\PP^{g-1}} \rvert_C$ where $C$ is a general tetragonal canonical curve of genus $g$ and $Q$ is the $3$-fold scroll on which $C$ lies. The strategy is to degenerate $C$ to a $g$-secant union $C_1 \cup C_2$ where $C_1 \subset \PP^{g-2} \subset \PP^{g-1}$ is a rational normal curve of degree $g-2$ and $C_2 \subset Q \subset \PP^{g-1}$ is an elliptic normal curve such that
$$[C_1]=H^2+HR$$
$$[C_2]=3H^2-(2g-9)HR$$
We take $C_1$ to be a hyperplane section of a divisor $S \subset Q$ of class $H+R$. Observe that $C_1$ is rational of degree $g-2$ since $(H^2+HR) \cdot R=1$ and $(H^2+HR) \cdot H=g-2$.

\begin{lemma}
If $C$ is a general tetragonal curve and $Q$ is the scroll containing it then the class
$$[C_2]=3H^2-(2g-9)HR$$
on $Q$ contains elliptic curves.
\end{lemma}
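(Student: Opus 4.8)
The plan is to show that the divisor class $[C_2] = 3H^2 - (2g-9)HR$ on the threefold scroll $Q$ contains a smooth elliptic curve by first verifying that the numerical invariants are correct for an elliptic normal curve, and then producing an actual member of the class that is smooth and connected of genus $1$. First I would compute the relevant intersection numbers: the degree of $C_2$ in $\PP^{g-1}$ is $[C_2] \cdot H = (3H^2 - (2g-9)HR) \cdot H$, and by the arithmetic genus formula (adjunction on the threefold $Q$) I would verify that a curve in this class has arithmetic genus $1$. These computations use only the standard intersection ring of the scroll $Q = \PP(V)$, where $H^2 \cdot R$ and $H^3$ are determined by $\deg Q = g-4$ and the fiber relation $R^2 = 0$.

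Next I would realize $C_2$ explicitly via a degeneracy or complete-intersection-type construction inside $Q$. Since $[C_2] = 3H^2 - (2g-9)HR$ and we already have the surface $S$ of class $H+R$ carved out to contain $C_1$, the natural approach is to exhibit $C_2$ as a curve cut out on $Q$ by appropriate sections — for instance as a residual intersection or as a complete intersection of two surface classes whose product gives $[C_2]$. One looks for classes $[A] = \alpha H - \beta R$ and $[B] = \alpha' H - \beta' R$ on $Q$ with $[A] \cdot [B] = [C_2]$ and then invokes a Bertini-type argument to ensure smoothness of a general such intersection. I would check that the linear systems $|A|$ and $|B|$ are base-point free (or at least that the general member is smooth away from any base locus), which reduces to showing the relevant line bundles on $Q$ are globally generated — again controlled by the balancedness of $V$ from Ballico's result.

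The main obstacle I anticipate is the \textbf{effectivity and smoothness} of the construction: one must confirm that the coefficient $-(2g-9)$ does not force the linear system to be empty or to have a base locus that obstructs smoothness. Because the fiber coefficient is strongly negative, $C_2$ wraps around the scroll in a nontrivial way, so a naive complete-intersection presentation may not exist, and instead $C_2$ should be constructed as an elliptic normal curve of the correct degree that lies on $Q$ by the same scroll-sweeping argument used for $C$ itself — namely, $C_2$ carries a $g^1_3$ (a trigonal or tetragonal structure) whose fibers sweep out $Q$. The delicate point is matching the syzygy/scroll data of $C_2$ to the ambient $Q$ so that $C_2 \subset Q$ genuinely holds; I would likely argue this by a dimension count on the relevant Hilbert scheme or by deforming from a reducible/nodal degenerate elliptic curve whose components are rational curves in fiber classes, then smoothing. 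Verifying that the smoothing stays inside $Q$ and has the prescribed class is where the genuine work lies, and it is here that one uses that $C_2$ is a \emph{general} member so that Bertini and standard smoothing results apply.
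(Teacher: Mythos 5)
Your first-choice construction --- factoring $[C_2]$ as a complete intersection $[A]\cdot[B]$ of two surface classes and applying Bertini --- provably fails for two of the three residue classes of $g$ modulo $3$, and the fallback you sketch is a program rather than a proof. Concretely, the only factorization of $3H^2-(2g-9)HR$ into effective surface classes is (up to order) $[A]=H-\beta R$, $[B]=3H-\gamma R$ with $\gamma+3\beta=2g-9$; using $K_Q=-3H+(g-5)R$, $[C_2]\cdot H=g$ and $[C_2]\cdot R=3$, adjunction gives $2p_a-2=4g-15-3(\beta+\gamma)$ for such a complete intersection, and forcing $p_a=1$ yields $\beta=(g-6)/3$, an integer only when $g\equiv 0\pmod 3$. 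So outside that case no complete intersection of two divisors on $Q$ in the class $[C_2]$ is an elliptic curve. Your ``main obstacle'' paragraph correctly senses this, but the replacement --- realize $C_2$ as an abstract elliptic normal curve whose $g^1_3$ sweeps out $Q$, or smooth a nodal degenerate curve inside $Q$ --- defers exactly the content of the lemma: that the threefold scroll swept out by a degree-$3$ pencil on an elliptic normal curve of degree $g$ can be arranged to be the \emph{balanced} scroll $Q$, or that the smoothing stays in $Q$ in the prescribed class, is precisely what has to be shown and is left unproved. A smaller point: your opening step of ``verifying the arithmetic genus is $1$ by adjunction on the threefold'' is not available as stated, since on a threefold curves in a fixed numerical class can have different genera; adjunction only computes the genus once you have a surface or a complete intersection presentation in hand.

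The paper sidesteps all of this by using balancedness (Ballico) to identify $Q$ explicitly: according to $g\bmod 3$, $Q$ is isomorphic to $\PP^1\times\PP^2$, to $\Bl_L\PP^3$, or to the small resolution of the cone over a quadric surface, and after transporting $H$ and $R$ through this isomorphism one simply writes down an elliptic curve in the class in each model (the graph of a degree-$3$ map from a plane cubic, the strict transform of an elliptic space curve meeting $L$ in one point, and the projection of an elliptic normal curve in $\PP^4$ from a point of itself). If you want to salvage your approach, adopt this identification first: in the case $g\equiv 0\pmod 3$ your complete intersection of a $(1,1)$-divisor with $\PP^1\times(\text{plane cubic})$ literally recovers the paper's graph construction, but the other two congruence classes genuinely require the explicit models rather than a Bertini argument.
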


\begin{proof}
We have $Q \cong \PP(\mathcal{E})$ where
$$\mathcal{E} = \O_{\PP^1}(a_1) \oplus \O_{\PP^1}(a_2) \oplus \O_{\PP^1}(a_3)$$
and since $C$ is general $\mathcal{E}$ is balanced. Then there is an isomorphism $\phi$ from $Q$ to one of the following depending on the value of $g \mod 3$
\begin{itemize}
\item $\PP^1 \times \PP^2$ when $g=0 \mod 3$.
\item $\Bl_L \PP^3$, the blow up of $\PP^3$ at a line $L$, when $g=1 \mod 3$.
\item The small resolution of the cone over a smooth quadric $\PP^1 \times \PP^1 \subset \PP^3 \subset \PP^4$ with vertex a point $p$, when $g=2 \mod 3$.
\end{itemize}

We have $\Pic(Q) \cong \Z H \oplus \Z R$ and the isomorphism $\phi$ acts on the Picard group as
$$F \mapsto F \ \ \ H \mapsto H-\left\lfloor \frac{g-3}{3} \right\rfloor F$$
with this correspondence the following are elliptic curves with class $[C_2]$.

\begin{itemize}
\item When $Q \cong \PP^1 \times \PP^2$ take a smooth cubic curve in $\PP^2$ and $\psi$ a degree three map from this curve to $\PP^1$, then the graph of $\psi$ is a curve of class $[C_2]$.
\item When $Q \cong \Bl_L \PP^3$ take an elliptic normal curve meeting a line $L$ in one point, then its strict transform in $\Bl_L \PP^3$ has class $[C_2]$.
\item If $g= 2 \mod 2$ let $X \subset \PP^4$ be an elliptic normal curve, and $Y$ the projection of $X$ from a point $p \in X$ onto a hyperplane $H \subset \PP^4$. Then $Y$ lies on a smooth quadric hypersurface $Q \subset H$.
\end{itemize}
\end{proof}

Let $\nu:X \to C_1 \cup C_2$ be the normalization map and define 
$$\mathcal{E}:=\nu^* N_{Q/\PP^{g-1}} \rvert_{C_1 \cup C_2}$$ 
Note that we have isomorphisms
$$\mathcal{E} \rvert_{C_1} \cong N_{Q/\PP^{g-1}} \rvert_{C_1}$$
$$\mathcal{E} \rvert_{C_2} \cong N_{Q/\PP^{g-1}} \rvert_{C_2}$$
There is a commutative diagram
\[\begin{tikzcd}
	0 & {T_{C_1}} & {T_Q \rvert_{C_1}} & {N_{C_1/Q}} & 0 \\
	0 & {T_{\PP^{g-2}} \rvert_{C_1}} & {T_{\PP^{g-1}} \rvert_{C_1}} & {\O_{C_1}(1)} & 0
	\arrow[from=1-1, to=1-2]
	\arrow[from=1-2, to=1-3]
	\arrow[from=1-2, to=2-2]
	\arrow[from=1-3, to=1-4]
	\arrow[from=1-3, to=2-3]
	\arrow[from=1-4, to=1-5]
	\arrow[from=1-4, to=2-4]
	\arrow[from=2-1, to=2-2]
	\arrow[from=2-2, to=2-3]
	\arrow[from=2-3, to=2-4]
	\arrow[from=2-4, to=2-5]
\end{tikzcd}\]

I claim that the map $N_{C_1/Q} \to \O_{C_1}(1)$ is surjective. If not then there is a point $p \in C_1$ such that $T_{Q,p} \subset T_{\PP^{g-2},p}$. This implies that $L \subset \PP^{g-2}$ where $L$ is the fiber of the rational scroll $S$ containing $p$, which contradicts lemma $\ref{L2}$. It follows from the snake lemma that we have a surjective map
$$N_{C_1/\PP^{g-2}} \twoheadrightarrow \mathcal{E} \rvert_{C_1}$$
Since $N_{C_1/\PP^{g-2}}$ is stable with slope $g$ by lemma \ref{L3} we conclude that given any subbundle $\mathcal{F} \subset \mathcal{E}$ we have $\mu(\mathcal{F} \rvert_{C_1}) \leq g$.

There is also a surjective map of bundles
$$N_{C_2/\PP^{g-1}} \twoheadrightarrow \mathcal{E} \rvert_{C_2}$$
Since $C_2$ is an elliptic normal curve $N_{C_2/\PP^{g-1}}$ is semi-stable by Proposition \ref{P3} and its slope is
$$\mu(N_{C_2/\PP^{g-1}})=\frac{g^2}{g-2}=g+2+\frac{4}{g-2}$$
It follows that if $\mathcal{F} \subset \mathcal{E}$ is a subbundle then $\mu(\mathcal{F} \rvert_{C_2}) \leq g+2+4/(g-2)$. Combining this with the above we get
$$\mu^{\adj}(\mathcal{F}) \leq \mu(\mathcal{F})=\mu(\mathcal{F} \rvert_{C_1})+\mu(\mathcal{F} \rvert_{C_2}) \leq 2g+2+\frac{4}{g-2}$$
Thus by Proposition \ref{P2} we obtain the following Lemma
\begin{lemma} \label{L1}
If $C$ is a general tetragonal curve then 
$$\mu(\mathcal{F}) \leq 2g+2+\frac{4}{g-2}$$ 
for every subbundle $\mathcal{F} \subset N_{Q/\PP^{g-1}} \rvert_C$
\end{lemma}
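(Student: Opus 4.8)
The plan is to obtain the Lemma from the specialization result Proposition \ref{P2}, applied to the one-parameter degeneration of $C$ into the nodal $g$-secant union $C_1 \cup C_2$ constructed above. All of the geometric work has already been front-loaded into the two slope estimates on the special fiber: the snake-lemma surjection $N_{C_1/\PP^{g-2}} \onto \mathcal{E} \rvert_{C_1}$ (whose existence rests on Lemma \ref{L2} forcing $N_{C_1/Q} \to \O_{C_1}(1)$ to be surjective), combined with Lemma \ref{L3} and the stability of $N_{C_1/\PP^{g-2}}$, yields $\mu(\mathcal{F} \rvert_{C_1}) \leq g$; while the surjection $N_{C_2/\PP^{g-1}} \onto \mathcal{E} \rvert_{C_2}$, combined with Lemma \ref{L3} and the semi-stability of $N_{C_2/\PP^{g-1}}$ for the elliptic normal curve $C_2$ (Proposition \ref{P3}), yields $\mu(\mathcal{F} \rvert_{C_2}) \leq g+2+\tfrac{4}{g-2}$.

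Writing $c = 2g+2+\tfrac{4}{g-2}$, I would first note that on the normalization $\tilde C_1 \sqcup \tilde C_2$ the subbundle $\mathcal{F} \subseteq \mathcal{E}$ has the same rank on each component (this is exactly what makes the node-correction terms in $\mu^{\adj}$ well defined), so that $\mu(\mathcal{F}) = \mu(\mathcal{F} \rvert_{C_1}) + \mu(\mathcal{F} \rvert_{C_2}) \leq c$; since the correction terms are nonnegative we also get $\mu^{\adj}(\mathcal{F}) \leq \mu(\mathcal{F}) \leq c$ for every subbundle of $\mathcal{E}$ on the special fiber. Proposition \ref{P2} then propagates this bound to the general fiber, giving $\mu^{\adj}(\mathcal{F}) \leq c$ for every subbundle $\mathcal{F} \subset \mathcal{E}_t = N_{Q/\PP^{g-1}} \rvert_C$. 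Finally, because the general member $C$ is smooth it has no nodes, the correction term drops out, and $\mu^{\adj}$ agrees with $\mu$; this is exactly the asserted inequality $\mu(\mathcal{F}) \leq 2g+2+\tfrac{4}{g-2}$.

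The step I expect to be the real obstacle is producing the family $\mathscr{C} \to \Delta$ to which Proposition \ref{P2} is applied: one needs a flat family of connected nodal curves over a discrete valuation ring with special fiber $C_1 \cup C_2$ and general fiber a \emph{general} tetragonal canonical curve, together with a bundle $\mathcal{E}$ restricting fiberwise to $N_{Q/\PP^{g-1}}$. Since $C_1$ and $C_2$ both sit on the fixed scroll $Q$ and their classes add up to $[C_1] + [C_2] = 4H^2 - 2(g-5)HR = [C]$, the natural route is to smooth $C_1 \cup C_2$ inside $Q$ within the appropriate component of the Hilbert scheme and take $\mathcal{E} = N_{Q/\PP^{g-1}} \rvert_{\mathscr{C}}$. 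The delicate points are verifying that $C_1 \cup C_2$ lies in the closure of the smooth tetragonal locus (so that a smoothing exists at all) and that its generic smoothing is a general curve, since only then does the special-fiber estimate transfer to the general tetragonal curve in the statement.
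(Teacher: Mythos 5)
Your proposal follows the paper's proof essentially verbatim: the same degeneration to $C_1 \cup C_2$, the same two component-wise slope bounds via the surjections onto $\mathcal{E}\rvert_{C_1}$ and $\mathcal{E}\rvert_{C_2}$ together with Lemma \ref{L3}, the same additivity $\mu(\mathcal{F}) = \mu(\mathcal{F}\rvert_{C_1}) + \mu(\mathcal{F}\rvert_{C_2})$, and the same application of Proposition \ref{P2}. The smoothing issue you flag at the end is a legitimate point of care, but it is exactly the step the paper also leaves implicit, so your argument is correct and takes the same route.
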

By equation \ref{EQ1} we have
$$\mu(\mathcal{E})=2g+4+\frac{2}{g-4}$$
so that Theorem \ref{T1} follows directly from Lemma \ref{L1}. Furthermore there is a surjection $N_{Y/\PP^{g-1}} \rvert_C \twoheadrightarrow N_{Q/\PP^{g-1}} \rvert_C$ so $\mu(\mathcal{F}) \leq 2g+2+4/(g-2)$ for every subbundle $\mathcal{F} \subset N_{Y/\PP^{g-1}} \rvert_C$, this proves Theorem \ref{T2}.

\bibliographystyle{amsalpha}
\bibliography{refs}
\nocite{*}

\end{document}